\pgfplotsset{compat=1.18} 
\numberwithin{equation}{section}
\newcommand{\A}{\mathcal{A}}
\renewcommand{\L}{\mathcal{L}}
\newcommand{\M}{\mathcal{M}}
\newcommand{\N}{\mathcal{N}}
\newcommand{\T}{\mathbb{T}}
\newcommand{\conj}[1]{\overline{#1}}
\newcommand{\D}{\mathbb{D}}
\newcommand{\Di}{\mathcal{D}}
\newcommand{\B}{\mathcal{B}}
\newcommand{\Po}{\mathcal{P}}
\newcommand{\BMOA}{\textbf{BMOA}}
\newcommand{\VMOA}{\textbf{VMOA}}
\newcommand{\ip}[2]{\big\langle #1, #2 \big\rangle}
\newcommand{\m}{\textit{m}}
\newcommand{\hil}{\mathcal{H}} 
\newcommand{\hd}{Hol(\D)}
\newcommand{\hb}{\mathcal{H}(b)}
\newcommand{\To}[1]{\mathcal{T}_{\conj{#1}}}
\newtheorem{mainthm}{Theorem}
\newtheorem{thm}{Theorem}[section]
\newtheorem*{thm*}{Theorem}
\newtheorem{lem}[thm]{Lemma}
\newtheorem{cor}[thm]{Corollary}
\newtheorem*{cor*}{Corollary}
\newtheorem{prop}[thm]{Proposition}
\theoremstyle{definition}
\newtheorem{example}[thm]{Example}
\theoremstyle{definition}
\newtheorem{defn}[thm]{Definition}
\newtheorem*{question*}{Question}
\newtheorem{claim*}{Claim}
\title{Embeddings into de Branges-Rovnyak spaces}
\author[Malman and Seco]{Bartosz Malman and Daniel Seco}
\address{Division of Mathematics and Physics, 
        Mälardalen University,
		Västerås, Sweden}
\email{bartosz.malman@mdu.se}
\address{Universidad de la Laguna e IMAULL,  Avenida Astrof\'isico Francisco S\'anchez, s/n, Departamento de An\'alisis Matem\'atico, 38206 San Crist\'obal de La Laguna, Santa Cruz de Tenerife,  Spain} \email{dsecofor@ull.edu.es}
\begin{document}

\begin{abstract} We study conditions for containment of a given space $X$ of analytic functions on the unit disk $\D$ in the de Branges-Rovnyak space $\hb$. We deal with the non-extreme case in which $b$ admits a Pythagorean mate $a$, and derive a multiplier boundedness criterion on the function $\phi = b/a$ which implies the containment $X \subset \hb$. With our criterion, we are able to characterize the containment of the Hardy space $\hil^p$ inside $\hb$, for $p \in [2, \infty]$. The end-point cases have previously been considered by Sarason, and we show that in his result, stating that $\phi \in \hil^2$ is equivalent to $\hil^\infty \subset \hb$, one can in fact replace $\hil^\infty$ by $\BMOA$. We establish various other containment results, and study in particular the case of the Dirichlet space $\Di$, containment of which is characterized by a Carleson measure condition. In this context, we show that matters are not as simple as in the case of the Hardy spaces, and we carefully work out an example.
\end{abstract}

\thanks{Part of this work has been done during the first author's visit at the Department of Mathematics and Statistics of Université Laval in Québec, Canada, funded by the Simons-CRM Scholar-in-Residence program. The second author is funded through grant PID2023-149061NA-I00 by the Generaci\'on de Conocimiento programme and through grant RYC2021-034744-I by the Ram\'on y Cajal programme from Agencia Estatal de Investigaci\'on (Spanish Ministry of Science, Innovation and Universities).}
\maketitle

\section{Introduction and main results}

\subsection{$\hb$-spaces} In this note we study embeddings of other Banach spaces inside the de Branges-Rovnyak spaces $\hb$. The space $\hb$ is the Hilbert space of analytic functions in the unit disk $\D := \{ z \in \mathbb{C} : |z| < 1\}$ with a reproducing kernel $k_b: \D \times \D \to \mathbb{C}$ given by \[ k_b(\lambda, z) := \frac{1 - \conj{b(\lambda)}b(z)}{1-\conj{\lambda} z}. \] and where $b: \D \to \D$ is analytic. Resources on the theory of $\hb$-spaces include Sarason's little book \cite{sarasonbook} and more recent extensive monographs by Fricain and Mashreghi \cite{hbspaces1fricainmashreghi}, \cite{hbspaces2fricainmashreghi}.

Unfortunately, the various methods for construction of the space $\hb$ (for instance, those given in the above references) often leave one wondering what kind of functions $f$ are actually contained in the space, and how to compute the corresponding norms $\| f\|_{\hb}$. We always have $\hb \subset \hil^2$, the latter one being the classical Hardy space. The trivial case $\hb = \hil^2$, with equivalent norms, occurs precisely when $\sup_{z \in \D} |b(z)| < 1$. A celebrated result of Sarason states that the condition for containment in $\hb$ of the set of analytic polynomials $\Po$ is equivalent to the condition \[ \int_\T \log (1-|b(\zeta)|) d\m (\zeta) > -\infty\] where by $d\m$ we denote the Lebesgue measure on the unit circle $\T = \partial \D = \{ z \in \mathbb{C} : |z| = 1\}$ (normalized, for convenience, by the condition $m(\T) = 1$). The above logarithmic integrability condition is well-known to be equivalent to $b$ being a non-extreme point of the unit ball of $\hil^\infty$, the algebra of bounded analytic functions in $\D$. There is an observed dichotomy of properties of $\hb$ depending on if $b$ is extreme or not, and in this note we deal exclusively with \emph{non-extreme} symbols $b$.  

\subsection{A classical inclusion result}
One of Sarason's results from \cite[Theorem 1]{sarason1986doubly} characterizes when the containment $\hil^\infty \subset \hb$ occurs. In order to state his result, we need to first introduce the standard notion of a \textit{Pythagorean mate} $a: \D \to \D$ of $b$. This $a$ is the outer function satisfying the relation \[ |a|^2 + |b|^2 = 1\] on the boundary $\T$. The above equality is interpreted, as usual, in terms of the boundary functions on $\T$ induced by $a$ and $b$. The Pythagorean mate is unique if we impose the normalization condition $a(0) > 0$. The function $\phi = b/a$ is then a member of the Smirnov class $\N^+$ of quotients of bounded analytic functions with outer denominator (see \cite[Chapter II]{garnett}). It is not hard to see that the mapping $b \mapsto \phi = b/a$ implements a bijection between the non-extreme points of the unit ball of $\hil^\infty$ and the Smirnov class $\N^+$. In particular, we may think of any non-extreme space $\hb$ as corresponding to a function $\phi \in \N^+$. 

Sarason's result then reads as follows. 

\begin{thm}{\textbf{(Sarason)}} \thlabel{sarasonHinftyThm} Let $\phi = b/a$. The following two statements are equivalent:
\begin{enumerate}[(i)]
    \item $\hil^\infty \subset \hb$,
    \item $\phi \in \hil^2$.
\end{enumerate}    
\end{thm}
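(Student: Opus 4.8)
The plan is to run everything through the standard Toeplitz description of $\hb$ for non-extreme $b$. I will recall the following background: $a$ is outer with $a\in\hil^\infty$, $\|a\|_\infty\le1$; the co-analytic Toeplitz operator $T_{\overline a}$ on $\hil^2$ is injective because $a$ is outer; and $f\in\hb$ if and only if $T_{\overline b}f$ lies in $\operatorname{ran}T_{\overline a}$, in which case, writing $g_f$ for the unique $g\in\hil^2$ with $T_{\overline b}f=T_{\overline a}g$, one has $\|f\|_{\hb}^2=\|f\|_{\hil^2}^2+\|g_f\|_{\hil^2}^2$. I write $P_+$ for the Riesz projection of $L^2(\T)$ onto $\hil^2$ and $P_-=I-P_+$. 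I will also use that $1/a$ is outer and $\phi=b/a\in\N^+$, so that — since a Smirnov function with square-integrable boundary values lies in $\hil^2$, and $|a|^2+|b|^2=1$ a.e.\ on $\T$ — statement (ii) is equivalent to $1/a\in\hil^2$, i.e.\ to $\int_\T|a|^{-2}\,d\m<\infty$.

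For (ii) $\Rightarrow$ (i): given $\phi\in\hil^2$ and $f\in\hil^\infty$, I would set $g:=P_+(\overline\phi f)$, which lies in $\hil^2$ with $\|g\|_{\hil^2}\le\|\phi\|_{\hil^2}\|f\|_\infty$ (since $\overline\phi f\in L^2(\T)$), and then verify, using $\overline b=\overline a\,\overline\phi$ a.e.\ on $\T$, that
\[ T_{\overline b}f-T_{\overline a}g=P_+\big(\overline a\,(\overline\phi f-P_+(\overline\phi f))\big)=P_+\big(\overline a\,P_-(\overline\phi f)\big)=0, \]
the last equality because $\overline a$ has Fourier spectrum in $(-\infty,0]$ while $P_-(\overline\phi f)$ has spectrum in $(-\infty,-1]$. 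Hence $f\in\hb$ with $g_f=g$ and $\|f\|_{\hb}^2\le(1+\|\phi\|_{\hil^2}^2)\|f\|_\infty^2$; in particular $\hil^\infty\subset\hb$ and the inclusion is bounded.

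For (i) $\Rightarrow$ (ii): first, the inclusion $\hil^\infty\hookrightarrow\hb$ is closed (norm convergence in either space forces pointwise convergence on $\D$), so by the closed graph theorem there is a constant $C$ with $\|g_f\|_{\hil^2}\le\|f\|_{\hb}\le C\|f\|_\infty$ for every $f\in\hil^\infty$. The key point is that for any analytic polynomial $p$ the Pythagorean identity collapses a sum of Toeplitz terms:
\[ T_{\overline b}(bp)+T_{\overline a}(ap)=P_+(\overline b\cdot bp)+P_+(\overline a\cdot ap)=P_+\big((|b|^2+|a|^2)p\big)=P_+(p)=p. \]
Since $bp\in\hil^\infty$, its companion obeys $T_{\overline a}g_{bp}=T_{\overline b}(bp)$, so $T_{\overline a}(g_{bp}+ap)=p$; injectivity of $T_{\overline a}$ then identifies $g_{bp}+ap$ as $T_{\overline a}^{-1}(p)$, and $\|T_{\overline a}^{-1}(p)\|_{\hil^2}\le\|g_{bp}\|_{\hil^2}+\|ap\|_{\hil^2}\le C\|bp\|_\infty+\|p\|_{\hil^2}\le(C+1)\|p\|_\infty$. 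Taking $p=z^n$ and carrying out the short computation of $T_{\overline a}^{-1}(z^n)$ — which, by matching Taylor coefficients in $a\cdot(1/a)=1$, equals the polynomial $\sum_{k=0}^n\overline{c_{n-k}}\,z^k$, where $c_j$ are the Taylor coefficients of $1/a$ — gives $\sum_{k=0}^n|c_k|^2=\|T_{\overline a}^{-1}(z^n)\|_{\hil^2}^2\le(C+1)^2$ for all $n$, hence $1/a\in\hil^2$ and therefore $\phi\in\hil^2$.

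I expect the first direction to be routine once the Toeplitz picture is in place; the real content is in the converse. There the only genuine idea is the choice of test functions: one must feed the \emph{bounded} functions $bp$ into the bounded inclusion, because it is precisely then that $|a|^2+|b|^2=1$ exposes the obstruction, reducing $T_{\overline b}(bp)+T_{\overline a}(ap)$ to $p$. The remaining step — that $T_{\overline a}^{-1}$ carries $z^n$ to a polynomial whose coefficients are the conjugates of the first $n+1$ Taylor coefficients of $1/a$ — is the one place where a small explicit computation seems unavoidable.
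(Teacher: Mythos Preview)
Your argument is correct in both directions.

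For (ii)$\Rightarrow$(i) you do essentially what the paper does (in its proof of \thref{BMOAContProp}): given $\phi\in\hil^2$, produce the companion $f_+=P_+(\conj{\phi}f)$ and verify \eqref{hbEquation} using $\conj b=\conj a\,\conj\phi$. The paper carries this out for $\BMOA$ via $f=P_+u$, $u\in\L^\infty(\T)$; for $f\in\hil^\infty$ your version is the special case $u=f$.

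For (i)$\Rightarrow$(ii) your route is genuinely different from the paper's. The paper tests the bounded inclusion on the monomials $z^n$ themselves and invokes the formula $\|z^n\|_{\hb}^2=1+\sum_{k=0}^n|\phi_k|^2$, which it has derived from the unbounded Toeplitz calculus $p_+=\To{\phi}p$; this yields $\phi\in\hil^2$ in one line. You avoid the $\To{\phi}$ formalism entirely: you test on the functions $bz^n$, use the Pythagorean identity to obtain $\To{a}\big(g_{bz^n}+az^n\big)=z^n$, and then recognize the preimage as the polynomial with coefficients $\conj{c_{n-k}}$ coming from $1/a$, giving $1/a\in\hil^2$. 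Both arguments are short; the paper's is slightly more direct because it extracts the Taylor coefficients of $\phi$ rather than of $1/a$, but yours has the virtue of staying inside the bounded-symbol Toeplitz world (no need to define $\To{\phi}$ on polynomials). Your observation that $\phi\in\hil^2\Leftrightarrow 1/a\in\hil^2$, via $|\phi|^2+1=|a|^{-2}$ on $\T$ and the Smirnov property, is what closes the loop.
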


Whenever we have an inclusion of a space $X \subset \hil^2$, with the former space being continuously contained in $\hil^2$, the closed graph theorem implies that we can control the $\hb$-norm in the following way: there exists a constant $C > 0$ such that for each $f$ one has \[ \| f\|_{\hb} \leq C \| f\|_X.\] We interpret both above quantities to be $+\infty$ if $f$ is not contained in the corresponding space. Thus Sarason's result tells us precisely when the $\hb$-norm can be controlled by the supremum norm. Similar matters have been previously investigated in \cite{revcarlesonross}, where reverse Carleson measures for $\hb$-spaces have been studied. The results of this note will characterize symbols $b$ for which the $\hb$-norm is controlled by the norms of the Hardy spaces $\hil^p$, for $p \in [2, \infty]$.

\subsection{Inclusion of $\hil^p$-spaces} The earlier mentioned condition $\| b \|_\infty < 1$ is equivalent to the statement that $\phi = b/a \in \hil^\infty$, which therefore is equivalent to the Banach space equality $\hb = \hil^2$. Together with Sarason's above result, we obtain the two statements \[ \phi \in \hil^\infty \quad \Leftrightarrow \quad \hil^2 = \hb\] and \[ \phi \in \hil^2 \quad \Leftrightarrow \quad \hil^\infty \subset \hb.\] Cast in this way, one is immediately lead to attempt to \textit{``interpolate''} between these two results and derive inclusions $\hil^p \subset \hb$ as a consequence of the containment $\phi \in \hil^{\tilde{p}}$ for some $\tilde{p} > 2$. Here $\hil^p$ is the classical Hardy space consisting of functions analytic in $\D$ for which we have \begin{equation}
    \label{hpnormdef}
    \| f\|_p^p := \sup_{r \in (0,1)} \int_\T |f(r\zeta)|^p d\m(\zeta) < \infty.
\end{equation} Our first main result realizes the mentioned \textit{``interpolation''}.

\begin{mainthm}\thlabel{mainThm1}
    Let $\phi = b/a$, $p \in (2, \infty)$ and $\tilde{p} = \frac{2p}{p-2}$. The following two statements are equivalent:
    \begin{enumerate}[(i)]
    \item $\hil^p\subset \hb$,
    \item $\phi \in \hil^{\tilde{p}}$.
\end{enumerate}       
\end{mainthm}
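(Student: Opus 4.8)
The plan is to reduce both implications to the standard description of non-extreme $\hb$: a function $f \in \hil^2$ lies in $\hb$ if and only if there is $g \in \hil^2$ with $\To{b}f = \To{a}g$, such $g$ being then unique (since $a$ is outer, $\To{a}$ is injective), in which case $\|f\|_{\hb}^2 = \|f\|_2^2 + \|g\|_2^2$; moreover $g$, the \emph{companion} of $f$, equals $P_+(\conj{\phi}f)$ whenever $\conj{\phi}f \in L^1(\T)$, where $P_+$ is the Riesz projection and we used $\To{a}\To{\phi} = \To{b}$. Granting this, the implication (ii)$\Rightarrow$(i) is just Hölder's inequality: by the very definition of $\tilde{p}$ one has $\tfrac{1}{\tilde{p}}+\tfrac1p = \tfrac12$, so if $\phi \in \hil^{\tilde{p}}$ and $f \in \hil^p$, then $\conj{\phi}f \in L^2(\T)$ with $\|\conj{\phi}f\|_2 \le \|\phi\|_{\tilde{p}}\|f\|_p$; hence the companion $P_+(\conj{\phi}f) \in \hil^2$ and $\|f\|_{\hb}^2 \le \|f\|_2^2 + \|\phi\|_{\tilde{p}}^2\|f\|_p^2 \le (1+\|\phi\|_{\tilde{p}}^2)\|f\|_p^2$, which gives the continuous inclusion $\hil^p \subset \hb$.

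For (i)$\Rightarrow$(ii), the closed graph theorem (as explained above) furnishes a constant $C$ with $\|f\|_{\hb} \le C\|f\|_p$ for all $f$, and in particular $\|\To{\phi}f\|_2 \le C\|f\|_p$ for every $f \in \hil^p$. First I would upgrade integrability: since $\hil^\infty \hookrightarrow \hil^p \hookrightarrow \hb$ are continuous inclusions (the first because $\m(\T)=1$), \thref{sarasonHinftyThm} already gives $\phi \in \hil^2$, and this removes all $L^1$-convergence technicalities. Then for $g \in \hil^\infty$ and $f \in \hil^p$ the identity $\ip{\To{\phi}f}{g} = \ip{\conj{\phi}f}{g} = \ip{f}{\phi g}$ is legitimate, so $|\ip{f}{\phi g}| \le C\|f\|_p\|g\|_2$; taking the supremum over $f \in \hil^p$ with $\|f\|_p \le 1$, and invoking the duality $(\hil^p)^* = \hil^{p'}$ (with $p' = p/(p-1) \in (1,2)$) together with the $L^p$-boundedness of $P_+$, one concludes that multiplication by $\phi$ maps $\hil^2$ boundedly into $\hil^{p'}$, with norm $\lesssim C$.

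The crux is then to show that $\phi\cdot\hil^2 \subset \hil^{p'}$ forces $\phi \in \hil^{\tilde{p}}$, and here a direct choice of test function is circular, so I would truncate. For $n \ge 1$ let $u_n$ be the outer function with boundary modulus $|u_n| = \min(|\phi|,n)^{\tilde{p}/2}$; it exists because $\phi \in \N^+$ is not identically zero, hence $\log|\phi| \in L^1(\T)$, and it satisfies $u_n \in \hil^\infty$ with $\|u_n\|_2^2 = I_n := \int_\T \min(|\phi|,n)^{\tilde{p}}\,d\m$. The elementary exponent identities $\tfrac{1}{\tilde{p}} = \tfrac{1}{p'} - \tfrac12$ and $p'\big(1+\tfrac{\tilde{p}}{2}\big) = \tilde{p}$, both equivalent to $\tilde{p} = \tfrac{2p}{p-2}$, give the pointwise bound $|\phi|^{p'}|u_n|^{p'} \ge \min(|\phi|,n)^{\tilde{p}}$ on $\T$, whence $I_n \le \|\phi u_n\|_{p'}^{p'} \lesssim C^{p'}\|u_n\|_2^{p'} = C^{p'}I_n^{p'/2}$. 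Since $p' < 2$, this self-improving inequality bounds $I_n$ uniformly in $n$, and monotone convergence as $n \to \infty$ yields $\int_\T |\phi|^{\tilde{p}}\,d\m < \infty$; as $\phi \in \N^+$ and its boundary function lies in $L^{\tilde{p}}(\T)$, we get $\phi \in \hil^{\tilde{p}}$. The main obstacle is exactly this last step: one must route the multiplier estimate through the bounded truncations $u_n$, for which the identity $\|u_n\|_2^2 = I_n$ closes the loop, rather than testing against the natural candidate of modulus $|\phi|^{\tilde{p}/2}$, which lies in $\hil^2$ only once the conclusion is already known. Everything else — the structure theorem for $\hb$, Hölder's inequality, duality — is routine.
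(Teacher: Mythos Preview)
Your proof is correct. Both directions go through, and the bootstrapping argument with the outer truncations $u_n$ is carried out cleanly; the exponent identities you record are exactly what is needed to close the loop $I_n \le C^{p'} I_n^{p'/2}$.

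Your route coincides with the paper's up to the point where one has the multiplier bound $M_\phi: \hil^2 \to \hil^{p'}$; the paper packages this reduction as an abstract criterion (\thref{criterion2lemma}), whereas you obtain it directly from the companion formula and the duality $(\hil^p)^* = \hil^{p'}$, invoking Sarason's $\phi \in \hil^2$ to make the pairings converge. The divergence comes in extracting $\phi \in \hil^{\tilde p}$ from this multiplier bound. The paper argues by factorization: it passes from $\hil^2 \to \hil^{p'}$ to $L^2(\T) \to L^{p'}(\T)$, then observes that every $f$ in the predual $L^{2p/(p+2)}(\T)$ factors as $h\conj{g}$ with $h \in L^2$, $g \in L^p$ and $\|f\| = \|h\|_2\|g\|_p$, so the bilinear estimate $|\int \phi h\conj{g}| \le C\|h\|_2\|g\|_p$ identifies $\phi$ as an element of the dual space $L^{\tilde p}$. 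Your approach stays inside the analytic category and instead tests against the outer functions $u_n$ with $|u_n| = \min(|\phi|,n)^{\tilde p/2}$, obtaining a self-improving inequality. The factorization argument is shorter and more structural; your truncation argument is more hands-on but avoids the passage to $L^p(\T)$ and the (easy but not entirely trivial) extension of the multiplier bound from $\hil^2$ to $L^2(\T)$. Either method is standard.
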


As a consequence, we know precisely when the $\hb$-norm can be controlled by the $\hil^p$-norms in \eqref{hpnormdef}. Moreover, at the end-point $p = \infty$ our method actually gives an improvement of Sarason's \thref{sarasonHinftyThm}. 

\begin{mainthm}\thlabel{mainThm2}
    The following three statements are equivalent. 
    \begin{enumerate}[(i)]
    \item $\hil^\infty \subset \hb$,
    \item $\BMOA \subset \hb$,
    \item $\phi \in \hil^2$.
\end{enumerate}    
\end{mainthm}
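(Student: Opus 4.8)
The plan is: since $\hil^\infty$ embeds continuously into $\BMOA$, the implication $(ii)\Rightarrow(i)$ is trivial, and $(i)\Leftrightarrow(iii)$ is exactly Sarason's \thref{sarasonHinftyThm}; so it suffices to prove the single implication $(iii)\Rightarrow(ii)$, that is, assuming $\phi\in\hil^2$ one must show $\BMOA\subset\hb$. The continuity of that inclusion will drop out of the argument (as it must by the closed graph theorem).

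For this I would invoke the classical operator description of the non-extreme space $\hb$ (see \cite{sarasonbook} or \cite{hbspaces1fricainmashreghi}): a function $f\in\hil^2$ lies in $\hb$ if and only if there is $g\in\hil^2$ with $\To{b}f=\To{a}g$, and then $g$ is unique and $\|f\|_{\hb}^2=\|f\|_2^2+\|g\|_2^2$. Given $f\in\BMOA$, the right candidate for $g$ is $g:=\To{\phi}f=P_+(\conj{\phi}f)$, where $P_+$ is the Riesz projection; this is a well-defined analytic function on $\D$ because $\conj{\phi}f\in L^1(\T)$, being the product of $\phi\in\hil^2$ and $f\in\BMOA\subset\hil^2$. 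Using $\conj{b}=\conj{a}\,\conj{\phi}$ together with $a\in\hil^\infty$, a direct computation at the level of Fourier coefficients — entirely standard in the non-extreme $\hb$ theory, the point being that $\conj{a}\cdot P_-(\conj{\phi}f)$ carries only strictly negative frequencies so that its Riesz projection vanishes — yields the Toeplitz identity $\To{a}\To{\phi}f=\To{a\phi}f=\To{b}f$. Hence everything collapses to showing that this candidate $g=\To{\phi}f$ actually belongs to $\hil^2$.

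This is the heart of the matter and the step I expect to require the one real idea. One cannot argue as Sarason does at $p=\infty$, where $\conj{\phi}f\in L^2$ and so $\|P_+(\conj{\phi}f)\|_2\le\|\conj{\phi}f\|_2\le\|\phi\|_2\|f\|_\infty$ with no effort; for $f\in\BMOA$ unbounded the product $\conj{\phi}f$ is in general only in $L^1$. Instead I would estimate $\To{\phi}f$ by duality: for an analytic polynomial $h$, a short Fourier-coefficient computation gives $\ip{\To{\phi}f}{h}=\int_\T f\,\conj{\phi h}\, dm$, and now $\phi h\in\hil^2\subset\hil^1$, so the Fefferman $\hil^1$--$\BMOA$ duality yields $|\ip{\To{\phi}f}{h}|\le C\|f\|_{\BMOA}\|\phi h\|_1\le C\|f\|_{\BMOA}\|\phi\|_2\|h\|_2$, the last step being Cauchy--Schwarz. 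Letting $h$ run through the partial sums of the Taylor expansion of $\To{\phi}f$ then forces $\sum_{n\ge0}|\widehat{\To{\phi}f}(n)|^2\le C^2\|\phi\|_2^2\|f\|_{\BMOA}^2$, that is, $\To{\phi}f\in\hil^2$ with $\|\To{\phi}f\|_2\le C\|\phi\|_2\|f\|_{\BMOA}$.

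Feeding this back into the operator description gives $f\in\hb$ with $\|f\|_{\hb}^2=\|f\|_2^2+\|\To{\phi}f\|_2^2\lesssim\|f\|_{\BMOA}^2$ (using $\|f\|_2\lesssim\|f\|_{\BMOA}$), which is $(iii)\Rightarrow(ii)$ and completes the proof. I would also point out the contrast with \thref{mainThm1}: for $f\in\hil^p$ with $\phi\in\hil^{\tilde p}$, $\tilde p=2p/(p-2)$, one has $\conj{\phi}f\in L^2$ outright by Hölder's inequality since $1/\tilde p+1/p=1/2$, so there $\To{\phi}f\in\hil^2$ with no duality at all and the sufficiency half of \thref{mainThm1} follows immediately; the genuinely new feature at the $\BMOA$ endpoint is precisely that $\conj{\phi}f$ is merely in $L^1$, and the deficit is recovered exactly by trading Hölder for the $\hil^1$--$\BMOA$ pairing.
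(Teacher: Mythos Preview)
Your proof is correct. The paper gives two arguments for $(iii)\Rightarrow(ii)$, and yours sits between them. The first is more elementary than yours: writing $f=P_+u$ with $u\in L^\infty(\T)$, one sets $f_+:=P_+(\conj{\phi}\,u)$, which lies in $\hil^2$ immediately since $\conj{\phi}\,u\in L^2$; the Toeplitz identity $\To{a}f_+=\To{b}f$ is then checked in one line. One verifies easily that $P_+(\conj{\phi}\,u)=P_+(\conj{\phi}f)$, so this is exactly your candidate $g$, but the bounded representative $u$ makes the $\hil^2$ membership transparent without invoking Fefferman's duality. The paper's second proof applies the multiplier criterion of \thref{mainThm3} to $X=\VMOA$ (where polynomials are dense), obtaining $\VMOA\subset\hb$ from the boundedness of $M_\phi:\hil^2\to\hil^1$, and then upgrades to $\BMOA$ via a weak approximation by Ces\`aro means. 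Your duality estimate $|\ip{\To{\phi}f}{h}|\le C\|f\|_{\BMOA}\|\phi h\|_{\hil^1}$ is precisely the adjoint of that same multiplier bound, applied directly at the level of $\BMOA$; this has the merit of bypassing both the $\VMOA$ detour and the approximation step, at the cost of appealing to the $\hil^1$--$\BMOA$ pairing where the paper's first proof needs nothing deeper than Cauchy--Schwarz.
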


In fact, one may replace $\BMOA$ by any of the spaces $\VMOA$ or the disk algebra $\A$ in the statement of the theorem. Here $\BMOA$ and $\VMOA$ are, respectively, the spaces of analytic functions of bounded and vanishing mean oscillation on $\T$, namely, the dual and the pre-dual of the Hardy space $\hil^1$.

\subsection{Method: a multiplier criterion} Our \thref{mainThm1} and \thref{mainThm2} are consequences of an abstract result relating the containment of a rather general space $X$ inside $\hb$ to the boundedness of a certain multiplication operator. In the setting of our abstract result, we consider Banach spaces $X$ of analytic functions in $\D$ which admit a \textit{Cauchy dual} $X^*$ and which we define precisely in Section \ref{CauchyDualitySection}. In short, a space $X$ admits a Cauchy dual $X^*$ if every bounded linear functional on $X$ is represented by an analytic function $g \in X^*$, and the duality pairing is \begin{equation}
\ip{f}{g} := \lim_{r \to 1^-} \int_\T f(r\zeta)\conj{g(r\zeta)} d\m(\zeta), \quad f \in X, g \in X^*.
\end{equation}
For instance, if $X = \hil^p$, $1 < p < \infty$, then the Cauchy dual $X^*$ equals $\hil^q$, where $q$ is the Hölder conjugate index.

In simplest form, our abstract criterion reads as follows.

\begin{mainthm}\thlabel{mainThm3}
Let $X$ be a Banach space of analytic functions in $\D$ in which the analytic polynomials are dense, and let $\phi = b/a$. The following two statements are equivalent.

\begin{enumerate}[(i)]
    \item The multiplication operator $M_\phi : f \mapsto \phi f$ is bounded from $\hil^2$ to $X^*$.
    \item We have a continuous embedding $X \hookrightarrow \hb$.
\end{enumerate} The compactness of $M_\phi$ is equivalent to the compactness of the corresponding embedding.
\end{mainthm}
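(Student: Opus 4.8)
The plan is to reduce the embedding statement to a concrete description of the $\hb$-norm that is available in the non-extreme case. Recall that when $b$ is non-extreme with Pythagorean mate $a$ and $\phi = b/a$, a classical description (due to Sarason) states that $f \in \hb$ if and only if there exists $g \in \hil^2$ with $T_{\conj b} f = T_{\conj a} g$, equivalently $\phi f = g + h$ for some $h$ in the model-space–type complement, and one has $\|f\|_{\hb}^2 = \|f\|_2^2 + \|g\|_2^2$ for the minimal such $g$. In fact the cleanest route uses the operator $M_\phi$ directly: one shows $f \in \hb$ precisely when the Toeplitz-type equation $\conj a \cdot(\phi f) \in \overline{\hil^2_0} + \hil^2$ has an $\hil^2$-solution, so that membership of $f$ in $\hb$ with controlled norm is governed by the size of $\phi f$ measured in $\hil^2$ modulo the "anti-analytic ambiguity." The upshot I would isolate as a lemma: there is a constant $c>0$ so that for all $f$,
\[
\|f\|_{\hb}^2 \asymp \|f\|_2^2 + \operatorname{dist}_{\hil^2}\big(\phi f,\ \conj{\hil^2_0}\big)^2,
\]
and the distance on the right is exactly the $\hil^2$-norm of a suitable Toeplitz/Riesz projection of $\phi f$.

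With such a lemma in hand, the equivalence becomes almost formal. For the implication (i) $\Rightarrow$ (ii): if $M_\phi : \hil^2 \to X^*$ is bounded, I would dualize. The boundedness of $M_\phi : \hil^2 \to X^*$ is, by the Cauchy-duality pairing defined in the excerpt, equivalent to the boundedness of its adjoint, a multiplication-type operator from $X$ into $\hil^2$ (formally $M_{\conj\phi}$ composed with Riesz projection), and the norm of that adjoint controls exactly the distance quantity appearing in the lemma. Since polynomials are dense in $X$ and $\|p\|_2 \lesssim \|p\|_X$ for polynomials (which follows from $X$ having a Cauchy dual, hence continuous point evaluations and continuous inclusion into $\hil^2$), one gets $\|p\|_{\hb} \lesssim \|p\|_X$ for all polynomials $p$, and then a density/closed-graph argument promotes this to the continuous embedding $X \hookrightarrow \hb$. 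For (ii) $\Rightarrow$ (i): if $X \hookrightarrow \hb$ continuously, then for polynomials $p$ we have $\operatorname{dist}_{\hil^2}(\phi p, \conj{\hil^2_0}) \lesssim \|p\|_{\hb} \lesssim \|p\|_X$; reading this back through the duality pairing shows that the functional $q \mapsto \ip{\phi p}{q}$ is bounded on $\hil^2$ uniformly, i.e. $M_\phi p \in X^*$ with $\|M_\phi p\|_{X^*} \lesssim \|p\|_2$ — wait, one must be careful about which variable carries which norm; the correct bookkeeping is that testing $\phi p$ against $\hil^2$ controls $\|M_\phi p\|_{X^*}$ in terms of $\|p\|_{\hil^2}$ after again invoking density of polynomials, now in $\hil^2$. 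Extending from polynomials to all of $\hil^2$ by density and boundedness yields (i).

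For the compactness addendum, I would run the same correspondence but track compactness through the adjoint: $M_\phi : \hil^2 \to X^*$ is compact iff its adjoint is, and the adjoint being compact translates, via the norm identity in the lemma, into the embedding $X \hookrightarrow \hb$ mapping bounded sets to relatively compact sets. The standard tool here is to test on normalized reproducing kernels or on the sequence $z^n$ and use that a bounded multiplication operator between these spaces is compact iff it maps weakly null sequences to norm null sequences; the polynomial density ensures no pathology at the "boundary" of $X$.

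The main obstacle I anticipate is establishing the norm identity in the lemma with the right constants and, in particular, correctly identifying the Cauchy dual $X^*$ of a general admissible $X$ with the target space of $M_\phi$ — the duality pairing is only a limit of integrals and need not be an honest $L^2$ pairing, so one must verify that the formal adjoint computation $\ip{\phi f}{g}_{\hil^2} = \ip{f}{\,\overline{\phi}\,g\,}$-style manipulation is legitimate, i.e. that $\phi f \in \hil^1$ with enough regularity for the pairing to make sense, and that the Smirnov-class nature of $\phi$ (denominator outer) is exactly what rescues integrability. Handling the case $\|b\|_\infty = 1$ but $b$ non-extreme, where $\phi \notin \hil^\infty$ and $\hb \subsetneq \hil^2$ properly, is where this subtlety bites; the extreme case is excluded by hypothesis so $a$ is genuinely outer and invertible in $\N^+$, which is what makes the whole reduction go through.
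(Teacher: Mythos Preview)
Your overall strategy---express $\|f\|_{\hb}$ via a Toeplitz-type operator with symbol $\conj{\phi}$, then dualize to get $M_\phi$---is exactly the paper's. But several steps as written are either wrong or only gestured at.

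First, your key lemma is misstated. You write $\operatorname{dist}_{\hil^2}(\phi f,\conj{\hil^2_0})$, but $\phi f$ is analytic, so when it lies in $L^2$ that distance is just $\|\phi f\|_2$; this would give $\|f\|_{\hb}^2 \asymp \|f\|_2^2 + \|\phi f\|_2^2$, which is false. The correct quantity is $\|T_{\conj{\phi}}f\|_2$, i.e.\ the analytic projection of $\conj{\phi}f$, not of $\phi f$. You seem to know this later (``$M_{\conj{\phi}}$ composed with Riesz projection''), but the lemma as stated does not survive.

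Second, your claim that ``$X$ having a Cauchy dual'' forces a continuous inclusion $X\subset\hil^2$ is false: the Bergman space has Cauchy dual $\Di$ but is not contained in $\hil^2$. The paper simply assumes $X$ is continuously contained in $\hil^2$ as part of the hypotheses.

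Third, and most usefully for you: the integrability worries you raise at the end are exactly what the paper sidesteps by never attempting to make sense of $P_+(\conj{\phi}f)$ for general $f$. Instead it defines $T_{\conj{\phi}}$ purely algebraically on polynomials via the upper-triangular Taylor-coefficient matrix, proves the \emph{exact} identity $\|p\|_{\hb}^2 = \|p\|_2^2 + \|T_{\conj{\phi}}p\|_2^2$ for $p\in\Po$ using the factorization $T_{\conj{a}}T_{\conj{\phi}}=T_{\conj{b}}$ on $\Po$, and then runs the duality computation $\ip{T_{\conj{\phi}}z^n}{z^m}=\conj{\phi_{n-m}}=\ip{z^n}{M_\phi z^m}$ entirely at the level of matrix entries. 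Density of polynomials in $X$ (and in $\hil^2$) then does all the extension work via weak limits and the closed graph theorem. This also cleans up your $(ii)\Rightarrow(i)$ direction, where you yourself flag confusion about which variable carries which norm: once $T_{\conj{\phi}}:X\to\hil^2$ and $M_\phi:\hil^2\to X^*$ are recognized as formal adjoints on polynomials, boundedness of one gives boundedness of the other with no further bookkeeping.
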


In Section \ref{HardyApplicationSection} we appropriately relax the assumption on the density of polynomials in $X$ in order to apply our result to examples such as $X = \BMOA$. In Section \ref{section4} we apply the multiplier criterion to some other types of spaces $X$.

\subsection{Containment of the Dirichlet space} The case of the containment of the Dirichlet space $\Di$ inside a given $\hb$ is a curious one. The space $\Di$ consists of functions satisfying \[ \| f \|^2_{\Di} := \|f\|^2_{2} +  \int_\D |f'(z)|^2 dA(z) <\infty,\] where $dA$ is the area measure on the unit disk, normalized by the condition $A(\D) = 1$. An application of our multiplier criterion in \thref{mainThm3} readily tells us that we have $\Di \subset \hb$ if and only if the measure $|\phi|^2 dA$ is a Carleson measure for $\hil^2$, with the embedding being compact if this measure is a vanishing Carleson measure (see Section \ref{WeighedTaylorSeriesSpacesSection} for the definitions and proof of this claim). This is a rather insatisfactory characterization, and for this reason we spend some portion of this note in studying a specific example. 

Note that according to \thref{mainThm1} the containment of the Hardy spaces $\hil^p$ inside $\hb$ is determined entirely by the magnitude of the boundary values of $\phi$ (or $b$) on $\T$. In the case of the Dirichlet space and the Carleson measure condition, it is not immediately clear if the inner factor $I_b$ of $b$ plays a role, and if so, to what extent. To study this question, we introduce a one-parameter family of Smirnov class functions \[ \phi_c(z) = \frac{1}{(1-z)^c}, \quad z \in \D, \quad c > 0\] and consider \[ \theta(z) = \exp\Big(-\frac{1}{2} \cdot \frac{1+z}{1-z}\Big), \quad z \in \D.\] To $\phi_c$ there corresponds a unique (up to a unimodular constant factor) outer function $b_c : \D \to \D$ which satisfies the boundary value equation \[|\phi_c|^2 = \frac{|b_c|^2}{1-|b_c|^2}.\] In \thref{PhicDirichletContainmentExample} and \thref{ThetaPhicDirichletContainmentProp} below, we use the Carleson measure condition to find out the ranges of a parameter $c$ for which the de Branges-Rovnyak spaces corresponding to the Smirnov class functions $\phi_c$ and $\theta \phi_c$ contain the Dirichlet space. We obtain that the answer differs depending on if the inner factor $\theta$ is present or not, which confirms that the containment of $\Di \subset \hb$ is a matter more delicate than the containments $\hil^p \subset \hb$.

\subsection{Some other related results} Several existing works deal with results connected to ours. The converse to $\hb \subset X$ has been studied by Bellavita and Dellepiane in \cite{bellavita2024embedding}, in the particular case that $X$ is a so-called local Dirichlet space $\Di_\zeta$. A result of Sarason from \cite{sarason1997local} shows that the isometric equality $\hb = \Di_\zeta$ holds for an appropriate choice of $b$ and $\zeta$. A thorough investigation of when isomorphic (not necessarily isometric) equality between $\hb$ and a harmonically weighted Dirichlet space holds has been carried out in \cite{costara2013branges}. It turns out that such an equality holds only in rather specific cases.

\section{Abstract considerations: the multiplier criterion}

This section deals with elementary material. We present first some parts of the theory of $\hb$-spaces which are important in the sequel. Next, we introduce unbounded Toeplitz opeators and Cauchy duals. The section ends with a proof of our multiplier criterion.

\subsection{A linear operator equation}For a function $h \in \hil^\infty$, let $\To{h}$ denote the usual co-analytic Toeplitz operator with symbol $\conj{h}$. That is, if $\L^2(\T)$ is the space of square-integrable functions on the unit circle $\T$, and $P_+: \L^2(\T) \to \hil^2$ is the orthogonal projection, then $\To{h}f = P_+ \conj{h}f$ for $f \in \hil^2$. An analysis of the following linear operator equation in \eqref{hbEquation} will lead to the proofs of our main results.

\begin{prop} \thlabel{hbcontainment} Let $b$ be a non-extreme point of the unit ball of $\hil^\infty$, and $a: \D \to \D$ be the Pythagorean mate of $b$. Then $f\in \hil^2$ is contained in $\hb$ if and only if a solution $f_+ \in \hil^2$ exists to the operator equation 
\begin{equation}
    \label{hbEquation} \To{b}f = \To{a}f_+.
\end{equation} If a solution $f_+$ exists, then it is unique, and we have \[ \| f\|_{\hb}^2 = \|f\|^2_2 + \|f_+\|_2^2.\]
\end{prop}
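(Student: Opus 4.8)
The plan is to deduce the proposition from the standard ``complementary space'' description of $\hb$, according to which
\[
\|f\|_{\hb}^2 = \sup_{g \in \hil^2} \big( \|f + bg\|_2^2 - \|g\|_2^2 \big),
\]
with $f \in \hb$ exactly when this supremum is finite; this is Sarason's description of $\hb$ as the space complementary to $b\hil^2$ inside $\hil^2$ (see \cite{sarasonbook}), and it will be among the facts recalled at the start of this section. The first step is a purely algebraic rewriting of the expression under the supremum. For $f, g \in \hil^2$ one has $\langle f, bg\rangle = \langle \To{b}f, g\rangle$, while the defining relation $|b|^2 = 1 - |a|^2$ of the Pythagorean mate on $\T$ gives $\|bg\|_2^2 - \|g\|_2^2 = -\|ag\|_2^2$. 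Hence, writing $v := \To{b}f$ and letting $T_a$ denote multiplication by $a$ on $\hil^2$, so that $B := \To{a}T_a = T_a^*T_a \geq 0$ satisfies $\langle Bg,g\rangle = \|ag\|_2^2$, we obtain
\[
\|f\|_{\hb}^2 = \|f\|_2^2 + S, \qquad S := \sup_{g \in \hil^2}\big( 2\,\Re\langle v, g\rangle - \langle Bg, g\rangle\big).
\]

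The second step is the elementary Hilbert space fact that, for a bounded positive operator $B$ and a vector $v$, the quantity $\sup_g\big(2\Re\langle v,g\rangle - \langle Bg,g\rangle\big)$ is finite if and only if $v$ lies in $\mathrm{ran}(B^{1/2})$, in which case it equals $\|B^{-1/2}v\|_2^2$, where $B^{-1/2}v$ denotes the unique preimage of $v$ under $B^{1/2}$ in $\overline{\mathrm{ran}(B^{1/2})}$. Finiteness forces membership: the substitution $g \mapsto tg$ followed by optimization in $t > 0$ turns the bound $S < \infty$ into $|\langle v, g\rangle| \leq \sqrt{S}\,\|B^{1/2}g\|_2$ for all $g$, so $B^{1/2}g \mapsto \langle v,g\rangle$ is a well-defined bounded functional on $\mathrm{ran}(B^{1/2})$ and Riesz representation yields $v \in \mathrm{ran}(B^{1/2})$; the converse and the evaluation of $S$ are a completion of the square.

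The third step reinterprets $\mathrm{ran}(B^{1/2})$ and $\|B^{-1/2}v\|_2$ in terms of $\To{a}$, using crucially that $a$ is \emph{outer}. Then $T_a$ is injective with $\overline{\mathrm{ran}(T_a)} = \overline{a\hil^2} = \hil^2$, and $\To{a} = T_a^*$ is injective with dense range, since $\ker \To{a} = \hil^2 \ominus \overline{a\hil^2} = \{0\}$. Consequently the partial isometry $U$ in the polar decomposition $T_a = U|T_a|$ carries $\overline{\mathrm{ran}|T_a|} = \overline{\mathrm{ran}(T_a^*)} = \hil^2$ isometrically onto $\overline{\mathrm{ran}(T_a)} = \hil^2$, hence is unitary; moreover $B^{1/2} = |T_a|$ is injective, so $\overline{\mathrm{ran}(B^{1/2})} = \hil^2$. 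From $\To{a} = T_a^* = |T_a|U^* = B^{1/2}U^*$ we get $\mathrm{ran}(\To{a}) = \mathrm{ran}(B^{1/2})$, and for any $f_+ \in \hil^2$ with $\To{a}f_+ = v$ --- unique by injectivity of $\To{a}$ --- we have $B^{1/2}(U^*f_+) = v$, whence $B^{-1/2}v = U^*f_+$ and $\|B^{-1/2}v\|_2 = \|f_+\|_2$. Assembling the three steps: $f \in \hb \iff S < \infty \iff \To{b}f \in \mathrm{ran}(\To{a}) \iff$ equation \eqref{hbEquation} has a solution $f_+ \in \hil^2$; such $f_+$ is unique; and $\|f\|_{\hb}^2 = \|f\|_2^2 + \|f_+\|_2^2$.

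The main obstacle I anticipate is the bookkeeping of the third step: identifying $\mathrm{ran}(\To{a})$ with $\mathrm{ran}\big((\To{a}T_a)^{1/2}\big)$ and matching $\|B^{-1/2}v\|_2$ with $\|f_+\|_2$ requires care with closures of ranges and with the gap between $\mathrm{ran}(B^{1/2})$ and $\mathrm{ran}(B)$, which is nontrivial precisely when $1/a \notin \hil^\infty$. A minor preliminary point is to confirm that the supremum description of $\|\cdot\|_{\hb}$ is the form available at this stage; if instead only the reproducing-kernel / range-space description is at hand, one first reconciles the two via de Branges' theory of complementary spaces, which is routine.
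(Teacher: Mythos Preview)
The paper does not give its own proof of this proposition: immediately after stating it, the authors write ``The result is well-known and forms a basis for the study of non-extreme $\hb$-spaces. For a discussion and a proof, see for instance \cite[Section 23.3]{hbspaces2fricainmashreghi}.'' So there is nothing to compare against directly.

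Your argument is correct and self-contained. The algebraic rewriting in Step~1 is fine (using $|a|^2+|b|^2=1$ on $\T$), the quadratic-form lemma in Step~2 is the standard Douglas-type fact, and Step~3 is handled cleanly: outerness of $a$ gives $\overline{a\hil^2}=\hil^2$, hence both $T_a$ and $\To{a}$ are injective, the polar factor $U$ is unitary, and the identification $\mathrm{ran}(\To{a})=\mathrm{ran}(B^{1/2})$ together with $\|B^{-1/2}v\|_2=\|f_+\|_2$ follows. Uniqueness of $f_+$ comes from injectivity of $\To{a}$.

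One remark on presentation: the paper introduces $\hb$ only via its reproducing kernel, and the complementary-space supremum formula you take as your starting point is not recalled anywhere in the text. You flag this yourself at the end, and indeed you would need either to cite it from \cite{sarasonbook} or to derive it from the range-space description $\hb=\mathrm{ran}\,(I-T_bT_b^*)^{1/2}$; this is routine, but it should be made explicit since the paper does not do it for you.
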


The result is well-known and forms a basis for the study of non-extreme $\hb$-spaces. For a discussion and a proof, see for instance \cite[Section 23.3]{hbspaces2fricainmashreghi}. 

\subsection{Toeplitz operators with unbounded symbols}

In order to solve the equation \eqref{hbEquation}, one might be tempted to set $\phi = b/a$ and formally solve \[ f_+ = \To{\phi}f,\] which of course doesn't make sense, since the unbounded function $\phi$ cannot at once be interpreted as a well-defined symbol of a Toeplitz operator on $\hil^2$. Nevertheless, appropriately defined Toeplitz operators with such unbounded symbols come in handy in the analysis of \eqref{hbEquation}. They will play a key role throughout this note.

Let $\Po$ denote the set of analytic polynomials, and $\N^+$ denote the Smirnov class of analytic functions $\phi$ in $\D$ which can be expressed as quotients $\phi = c/d$, with $c,d \in \hil^\infty$ and $d$ an outer function (for precise definitions see, for instance, \cite[Chapter II]{garnett}).

\begin{defn} \thlabel{UnboundedToeplitzDef}
     If $\phi \in \N^+$ and $\{\phi_k\}_{k\geq 0}$ is the sequence of coefficients of the Taylor expansion of $\phi$ at $z = 0$, then we define $\To{\phi}: \Po \to \Po$ as the linear operator which, with respect to the basis $\{ z^n\}_{n=0}^\infty$ of $\Po$, has the following matrix representation:
     \[
     \begin{pmatrix}
    \conj{\phi_0} & \conj{\phi_1} & \conj{\phi_2} & \conj{\phi_3}  & \ldots \\
    0 & \conj{\phi_0} & \conj{\phi_1} & \conj{\phi_2}  & \ddots \\
    0 & 0 & \conj{\phi_0} & \conj{\phi_1}  & \ddots \\
    \vdots & \vdots & \vdots & \ddots & \ddots \\
\end{pmatrix}.\]
\end{defn}
In other words, $\To{\phi}$ is the linear extension of the following action on monomials $z^n$ (with slight abuse of notation): \begin{equation}
        \label{TphiDef} \To{\phi}z^n = \sum_{k=0}^n \conj{\phi_{n-k}} z^k.
    \end{equation}    
If $\phi$ is bounded in $\D$, then the above definition coincides on the polynomials with the action of the usual bounded Toeplitz operator $\To{\phi}: \hil^2 \to \hil^2$.
\begin{lem}\thlabel{TalgebraHom} If $\phi, \psi \in \N^+$, then \[ \To{\phi \psi} = \To{\phi}\To{\psi}.\]
\end{lem}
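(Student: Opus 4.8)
The plan is to verify the identity directly on the monomial basis $\{z^n\}_{n\geq 0}$ of $\Po$, since both $\To{\phi\psi}$ and the composition $\To{\phi}\To{\psi}$ are linear maps $\Po\to\Po$ and hence are determined by their action on the $z^n$. Before doing so, one should record the only two facts from function theory that the argument needs. First, $\N^+$ is closed under multiplication: if $\phi = c_1/d_1$ and $\psi = c_2/d_2$ with $c_i \in \hil^\infty$ and $d_i$ outer, then $\phi\psi = (c_1c_2)/(d_1d_2)$ with $c_1c_2 \in \hil^\infty$ and $d_1d_2$ outer (a product of outer functions is outer), so $\phi\psi \in \N^+$ and $\To{\phi\psi}$ is a well-defined operator in the sense of \thref{UnboundedToeplitzDef}. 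Second, since $\phi$ and $\psi$ are analytic in $\D$, their Taylor coefficients at the origin satisfy the Cauchy product formula $(\phi\psi)_m = \sum_{i=0}^m \phi_i\,\psi_{m-i}$.

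The computation itself is then pure bookkeeping with finite sums. Using \eqref{TphiDef} twice and collecting powers of $z$,
\[
\To{\phi}\To{\psi}z^n = \sum_{j=0}^n \conj{\psi_{n-j}}\,\To{\phi}z^j = \sum_{j=0}^n \conj{\psi_{n-j}} \sum_{k=0}^j \conj{\phi_{j-k}}\, z^k = \sum_{k=0}^n \Big(\sum_{j=k}^n \conj{\phi_{j-k}}\;\conj{\psi_{n-j}}\Big) z^k.
\]
Reindexing the inner sum by $i = j-k$ (so $i$ runs from $0$ to $n-k$) turns the parenthesized coefficient into $\conj{\sum_{i=0}^{n-k}\phi_i\,\psi_{n-k-i}} = \conj{(\phi\psi)_{n-k}}$, which is precisely the coefficient of $z^k$ in $\To{\phi\psi}z^n$ according to \eqref{TphiDef}. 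Since this holds for every $n\geq 0$, the two operators agree on $\Po$, which is the assertion of the lemma. (Equivalently, one is just multiplying the two upper-triangular Toeplitz matrices of \thref{UnboundedToeplitzDef} and observing that the product is again of Toeplitz form, with the convolved symbol.)

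There is essentially no obstacle to overcome here: everything takes place on polynomials, so all sums are finite and no questions about convergence or about (possibly unbounded) operator domains arise — the composition $\To{\phi}\circ\To{\psi}$ maps $\Po$ to $\Po$ literally. The only point genuinely worth a sentence is the closure of $\N^+$ under multiplication, which is what makes the left-hand side $\To{\phi\psi}$ meaningful in the first place. One could alternatively prove the identity by approximating $\phi$ and $\psi$ by bounded functions and invoking the classical composition rule $\To{h}\To{g} = \To{hg}$ for $h, g \in \hil^\infty$ (valid because the first symbol is co-analytic), but this would require a limiting argument and is longer than the direct coefficient computation above.
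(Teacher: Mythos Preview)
Your proof is correct and is exactly the ``short computation using the above matrix representation'' that the paper alludes to (but omits): you verify the identity on the monomial basis via the Cauchy product of the Taylor coefficients, which is precisely the matrix multiplication of the two upper-triangular Toeplitz matrices in \thref{UnboundedToeplitzDef}. Your remark that $\N^+$ is closed under products (so that $\To{\phi\psi}$ is well-defined) is the only non-bookkeeping point, and it is a welcome addition.
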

A proof of \thref{TalgebraHom}, which we skip, requires only a short computation using the above matrix representation of the operators. It follows that if $p$ is a polynomial and $\phi = b/a$, then $\To{a}\To{\phi}p = \To{b}p$. In notation of \thref{hbcontainment}, we have \[p_+ = \To{\phi}p\] and \begin{equation}
\label{hbPolynomialNormFormula} \|p\|_{\hb}^2 = \|p\|_2^2 + \|\To{\phi}p\|_2^2.\end{equation} In particular, we deduce from \eqref{TphiDef} that we have \begin{equation}
    \label{hbMonomialNormFormula} \|z^n\|^2_{\hb} = 1 + \sum_{k=0}^n |\phi_k|^2.
\end{equation} This is, of course, all well-known to specialists of the $\hb$-theory.

For certain well-behaved spaces $X$, the relation in \eqref{hbPolynomialNormFormula} immediately implies a criterion for containment of $X \subset \hb$. 

\begin{cor} \thlabel{criterion1lemma}
    Let $X$ be a Banach space continuously contained in $\hil^2$, and $\phi = b/a$. If $X$ contains the algebra of polynomials as a norm-dense subset, then $X \subset \hb$ if and only if there exists a constant $C > 0$ such that \[  \| \To{\phi} p \|_2 \leq C \|p\|_X, \quad p \in \Po.\]
\end{cor}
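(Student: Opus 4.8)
The plan is to read off both implications directly from the polynomial norm identity \eqref{hbPolynomialNormFormula}, namely $\|p\|_{\hb}^2 = \|p\|_2^2 + \|\To{\phi}p\|_2^2$ for every $p \in \Po$ (note that $\To{\phi}p$ is again a polynomial by \thref{UnboundedToeplitzDef}, so the quantity $\|\To{\phi}p\|_2$ makes sense), combined with the continuity of the inclusion $X \hookrightarrow \hil^2$ and a routine density argument.

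For the implication that the multiplier bound forces $X \subset \hb$: assuming $\|\To{\phi}p\|_2 \leq C\|p\|_X$ for all $p \in \Po$, and writing $\|p\|_2 \leq C'\|p\|_X$ for the continuous inclusion $X \hookrightarrow \hil^2$, the identity \eqref{hbPolynomialNormFormula} gives $\|p\|_{\hb} \leq (C^2 + C'^2)^{1/2}\|p\|_X$ for all polynomials $p$. Since $\Po$ is dense in $X$, this bounded map $\Po \to \hb$ extends uniquely to a bounded operator $T : X \to \hb$. It then remains to identify $T$ with the inclusion of $X$ into $\hb$: I would observe that $T$ followed by the continuous inclusion $\hb \hookrightarrow \hil^2$, and the continuous inclusion $X \hookrightarrow \hil^2$ itself, are two bounded operators $X \to \hil^2$ that agree on the dense subset $\Po$, hence they coincide. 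Therefore $Tf = f$ for every $f \in X$, so $X \subset \hb$, with the bound $\|f\|_{\hb} \leq (C^2 + C'^2)^{1/2}\|f\|_X$ in force.

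For the converse: if $X \subset \hb$, then — exactly as in the discussion following \thref{sarasonHinftyThm} — since $X$ and $\hb$ both embed continuously into $\hil^2$, hence into $\hd$ with pointwise convergence on $\D$, the inclusion map $X \hookrightarrow \hb$ has closed graph and is therefore bounded by the closed graph theorem: $\|f\|_{\hb} \leq C\|f\|_X$ for some constant $C$ and all $f \in X$. Restricting to $p \in \Po$ and invoking \eqref{hbPolynomialNormFormula} once more yields $\|\To{\phi}p\|_2 \leq \|p\|_{\hb} \leq C\|p\|_X$, which is the asserted estimate.

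The only point that is not entirely mechanical is the identification of the extension $T$ with the genuine inclusion in the first implication, and even this is a standard consequence of separate continuity into $\hil^2$ together with density of $\Po$; I do not expect any real obstacle here.
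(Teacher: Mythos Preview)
Your proof is correct and rests on the same two ingredients as the paper's: the polynomial norm identity \eqref{hbPolynomialNormFormula} for both directions, and the closed graph theorem for the converse. The only difference is in how you pass from polynomials to general $f \in X$ in the forward implication. The paper argues by weak compactness: given $p_n \to f$ in $X$, one has $\sup_n \|p_n\|_{\hb} < \infty$, extracts a weakly convergent subsequence in $\hb$, and identifies the weak limit with $f$ via pointwise convergence in $\D$. You instead extend the bounded map $\Po \to \hb$ by density to $T:X\to\hb$ and then identify $T$ with the inclusion by composing with $\hb \hookrightarrow \hil^2$. Your route is marginally cleaner in that it avoids passing to subsequences, while the paper's weak-limit argument has the mild advantage of being reusable verbatim in the later \thref{ContainmentImprovementProp}, where one only has bounded approximants rather than norm-convergent ones. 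Either way the content is the same.
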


\begin{proof}
If the above inequality holds, then take any $f \in X$ and a sequence of polynomials $\{p_n\}_n$ which converges to $f$ in the norm of $X$. The continuous containment of $X$ in $\hil^2$ implies that $p_n(z) \to f(z)$ for each $z \in \D$, and $\sup_n \|p_n\|_2 < +\infty$. Our assumption implies also that $\sup_n \| \To{\phi}p_n\|_2 < +\infty$, and then from the $\hb$-norm formula in \eqref{hbPolynomialNormFormula} we conclude that $\sup_n \| p_n\|_{\hb} < +\infty$. Thus we may pass to a subsequence of $\{p_n\}_n$ which converges weakly to some function in $\hb$. By the above stated pointwise convergence, this limit must be $f$, which is thus a member of $\hb$.

The converse implication follows immediately from closed graph theorem and \eqref{hbPolynomialNormFormula}. Indeed, the embedding $X \hookrightarrow \hb$ is a closed operator, and so there exists a constant $C > 0$ such that $\|p\|_{\hb} \leq C\|p\|_X$ for $p \in \Po$. The result now follows from \eqref{hbPolynomialNormFormula},
\end{proof}

\subsection{Cauchy duality} \label{CauchyDualitySection}
The criterion in \thref{criterion1lemma} simply means that $\To{\phi}: \Po \to \Po$ extends to a bounded operator $\To{\phi}: X \to \hil^2$. This condition might be hard to verify. Given some additional structure on $X$, we will be able to rephrase \thref{criterion1lemma} in terms of the boundedness of a multiplication operator which is the adjoint of $\To{\phi}$.

To do so, we will assume that $X$ admits a dual space $X^*$ which is itself a space of analytic functions in $\D$. The duality pairing $\ip{\cdot}{\cdot}$ between the spaces is assumed to have the form \begin{equation}
    \label{CauchyDualEq} \ip{f}{g} := \lim_{r \to 1^-} \int_\T f(r\zeta)\conj{g(r\zeta)} d\m(\zeta), \quad f \in X, g \in X^*.
\end{equation}
Here $d\m$ denotes the Lebesgue measure on the unit circle $\T$, normalized by the condition $m(\T) = 1$. Such duality pairings are usually called \textit{Cauchy pairings}, and $X^*$ is called the \textit{Cauchy dual} to $X$. The Cauchy dual $X^*$ contains the polynomials $\Po$, and this set may or may not be norm-dense in $X^*$ (consider $X = \hil^1, X^* = \BMOA$). In the case that both $f$ and $g$ are members of $\hil^2$, the definition \eqref{CauchyDualEq} reduces to \[ \ip{f}{g} = \ip{f}{g}_2 := \int_\T f\conj{g} \, d\m = \sum_{n=0}^\infty f_n \conj{g_n},\] where $\{f_n\}_n$ and $\{g_n\}$ are the sequences of Taylor coefficients (centered at $z=0$) of the functions $f$ and $g$.
From this observation, and from \eqref{TphiDef}, we easily deduce that \begin{equation}
    \label{ToeplitzPhiMatrixEq} \ip{\To{\phi}z^n}{z^m} = \conj{\phi_{n-m}},
\end{equation}  where we intepret $\phi_k \equiv 0$ for $k < 0$. Assuming for a moment that $\To{\phi}: X \to \hil^2$ is bounded, we consider the adjoint operator $\To{\phi}^*: \hil^2 \to X^*$. For monomials, the action of the adjoint operator is given by \[ \ip{z^n}{\To{\phi}^*z^m} = \ip{\To{\phi}z^n}{z^m} = \conj{\phi_{n-m}},\] from which we deduce (again, slightly abusing notation) that \[ \To{\phi}^*z^m = \sum_{n=0}^\infty \phi_{n-m} z^n = \phi(z)z^m.\] This shows that \[\To{\phi}^*f(z) := M_\phi f(z) = \phi(z)f(z), \quad f \in \hil^2,\] which identifies the adjoint as a multiplication operator $M_\phi: \hil^2 \to X^*$.

\subsection{Multiplier criterion}

Our criterion for containment $X \subset \hb$ reads as follows.

\begin{prop}\thlabel{criterion2lemma} Let $X$ and $X^*$ be as described earlier, with polynomials norm-dense in $X$, and $\phi = b/a$. The following three statements are equivalent.
\begin{enumerate}[(i)]
    \item $X$ is contained in $\hb$.
    \item The co-analytic Toeplitz operator $\To{\phi}: X \to \hil^2$ is bounded.
    \item The multiplication operator $M_\phi: \hil^2 \to X^*$ is bounded.
\end{enumerate}
\end{prop}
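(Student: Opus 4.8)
The plan is to establish the chain of equivalences (i) $\Leftrightarrow$ (ii) $\Leftrightarrow$ (iii), leveraging the material already developed. The equivalence (i) $\Leftrightarrow$ (ii) is essentially a restatement of \thref{criterion1lemma}: since the polynomials are norm-dense in $X$ and $X$ embeds continuously into $\hil^2$, the inequality $\|\To{\phi}p\|_2 \leq C\|p\|_X$ for $p \in \Po$ is exactly the condition that $\To{\phi}$, initially defined on $\Po$ via \thref{UnboundedToeplitzDef}, extends to a bounded operator $X \to \hil^2$; and that inequality was shown in \thref{criterion1lemma} to be equivalent to $X \subset \hb$ (where one should note that a continuous containment is automatic once $X \subset \hb$ set-theoretically, by the closed graph argument already invoked). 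So the first equivalence requires only a sentence or two of bookkeeping.

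The substantive point is (ii) $\Leftrightarrow$ (iii), which is a duality statement. First I would recall from the discussion in Section \ref{CauchyDualitySection} the computation \eqref{ToeplitzPhiMatrixEq}, namely $\ip{\To{\phi}z^n}{z^m}_2 = \conj{\phi_{n-m}}$, together with the observation that for $f,g \in \hil^2$ the Cauchy pairing reduces to the $\hil^2$ inner product. The idea is that $M_\phi : \Po \to X^*$ is, at the level of the monomial ``matrix coefficients'' against the pairing $\ip{\cdot}{\cdot}$ between $X$ and $X^*$, precisely the formal transpose of $\To{\phi}: \Po \to \hil^2$: indeed $\ip{z^n}{M_\phi z^m} = \ip{\To{\phi}z^n}{z^m}_2 = \conj{\phi_{n-m}}$ as already derived. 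Hence, for the direction (ii) $\Rightarrow$ (iii): if $\To{\phi}: X \to \hil^2$ is bounded, then its Banach-space adjoint maps $(\hil^2)^* = \hil^2$ into $X^*$ (using that $X^*$ is genuinely the dual of $X$ under the Cauchy pairing), it agrees with $M_\phi$ on polynomials by the matrix-coefficient identity, and since $\Po$ is dense in $\hil^2$ it equals $M_\phi$ on all of $\hil^2$; thus $M_\phi$ is bounded with $\|M_\phi\| = \|\To{\phi}\|$. For the converse (iii) $\Rightarrow$ (ii): if $M_\phi : \hil^2 \to X^*$ is bounded, then for $p \in \Po$ and $h \in \Po \subset \hil^2$ one has $|\ip{\To{\phi}p}{h}_2| = |\ip{p}{M_\phi h}| \leq \|p\|_X \|M_\phi h\|_{X^*} \leq \|M_\phi\|\,\|p\|_X\,\|h\|_2$; taking the supremum over $h \in \Po$ with $\|h\|_2 \leq 1$, and using density of $\Po$ in $\hil^2$ to identify this supremum with $\|\To{\phi}p\|_2$, gives $\|\To{\phi}p\|_2 \leq \|M_\phi\|\,\|p\|_X$, which is condition (ii) (equivalently the hypothesis of \thref{criterion1lemma}).

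The main obstacle — really the only place one must be careful — is the legitimacy of the adjoint/transpose identification: I am using that $X^*$ is literally the dual space of $X$ realized via the Cauchy pairing \eqref{CauchyDualEq}, so that a functional $f \mapsto \ip{\To{\phi}f}{h}_2$ on $X$ (which is bounded of norm $\leq \|\To{\phi}\|\,\|h\|_2$ once (ii) holds) is represented by a unique element of $X^*$, and that this element is $M_\phi h$. The matching on the dense subset $\Po$ pins down the element, but one should make explicit that the pairing $\ip{\cdot}{\cdot}$ between a general $f \in X$ and a general $g \in X^*$ is computed as the limit \eqref{CauchyDualEq} and that polynomials are dense in $X$ so that the two functionals $f \mapsto \ip{\To{\phi}f}{h}_2$ and $f \mapsto \ip{f}{M_\phi h}$, being continuous on $X$ and equal on $\Po$, coincide. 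There is no analytic difficulty beyond this; the proof is a soft functional-analytic transposition argument resting on the concrete matrix computation \eqref{ToeplitzPhiMatrixEq} and on \thref{criterion1lemma}. Finally, the parenthetical assertion in \thref{mainThm3} that compactness of $M_\phi$ matches compactness of the embedding follows by the same adjoint correspondence, since an operator between Banach spaces is compact if and only if its adjoint is (Schauder's theorem), and $\To{\phi}: X \to \hil^2$ is compact precisely when the embedding $X \hookrightarrow \hb$ is, by \eqref{hbPolynomialNormFormula}; I would note this in one closing sentence.
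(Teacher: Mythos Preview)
Your argument for the equivalences $(i)\Leftrightarrow(ii)\Leftrightarrow(iii)$ is correct and matches the paper's almost line for line: the paper also cites \thref{criterion1lemma} for $(i)\Leftrightarrow(ii)$, invokes the adjoint identification from Section~\ref{CauchyDualitySection} for $(ii)\Rightarrow(iii)$, and for $(iii)\Rightarrow(ii)$ computes the same matrix coefficients $\ip{M_\phi^* z^n}{z^m}_2 = \conj{\phi_{n-m}}$ (the paper phrases this via the bidual restriction $M_\phi^*|_X$, while you write out the dual estimate on polynomials directly, but these are the same manoeuvre).

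One caveat on your closing sentence about compactness: the equivalence ``$\To{\phi}:X\to\hil^2$ compact $\Leftrightarrow$ embedding $X\hookrightarrow\hb$ compact'' does \emph{not} follow from \eqref{hbPolynomialNormFormula} and Schauder alone. Since $\|p\|_{\hb}^2=\|p\|_2^2+\|\To{\phi}p\|_2^2$, compactness of the embedding forces compactness of \emph{both} $X\hookrightarrow\hil^2$ and $\To{\phi}$, and conversely you need both. The paper states its compactness proposition under the standing hypothesis that $X\hookrightarrow\hil^2$ is already compact, which is precisely what your sketch omits.
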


\begin{proof}
We have above already proved the equivalence $(i) \Leftrightarrow (ii)$ and the implication $(ii) \Rightarrow (iii)$. It remains to prove that $(iii) \Rightarrow (ii)$, which is routine. Indeed, the adjoint operator $M_\phi^*: X^{**} \to \hil^2$ is bounded, and therefore so is its restriction $M_\phi^*: X \to \hil^2$. We have \[ \ip{M^*_\phi z^n}{z^m}_2 = \ip{z^n}{M_\phi z^m} = \conj{\phi_{n-m}}\] which shows that $M^*_\phi = \To{\phi}$.
\end{proof}

As it stands, \thref{criterion2lemma} cannot be applied to $X = \BMOA$, since the density of polynomials assumption does not hold in this case. However, $X = \VMOA$ satisfies this assumption, and in \thref{ContainmentImprovementProp} below we shall see how to deduce the contaiment $\BMOA \subset \hb$ from a general statement and the containment $\VMOA \subset \hb$.

\subsection{Compact embedding criterion}

The compactness of the embedding $X \subset \hb$ can also be characterized in terms of properties of the operator $\M_\phi: \hil^2 \to X^*$. Note that it is necessary for the embedding $X\ \subset \hil^2$ to be compact if $X \subset \hb$ is to be compact. 

\begin{prop} 
Let $X$ and $X^*$ be as before, and $\phi = b/a$. Assume that the embedding $X \hookrightarrow \hil^2$ is compact. The following three statements are equivalent. 

\begin{enumerate}[(i)]
    \item $X$ is compactly embedded in $\hb$. 
    \item The co-analytic Toeplitz operator $\To{\phi}: X \to \hil^2$ is compact.
    \item The multiplication operator $M_\phi: \hil^2 \to X^*$ is compact.
\end{enumerate}
\end{prop}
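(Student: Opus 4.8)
The plan is to realize the embedding $X \hookrightarrow \hb$ as one coordinate of a vector-valued operator whose other coordinate is compact by hypothesis, and then to shuttle compactness between $\To{\phi}$ and $M_\phi$ through Schauder's theorem. First I would observe that each of (i), (ii), (iii) entails its own bounded version (compact operators are bounded), so by \thref{criterion2lemma} the inclusion $X \subset \hb$ holds in all three cases and statement (i) is meaningful. Next, recall from \thref{hbcontainment} that the assignment $Rf := f_+$ defines, by uniqueness of the solution, a linear map $R : \hb \to \hil^2$ with $\|f\|_{\hb}^2 = \|f\|_2^2 + \|Rf\|_2^2$, so that $V : \hb \to \hil^2 \oplus \hil^2$, $Vf := (f, Rf)$, is a linear isometry onto a closed subspace. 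On polynomials $Rp = \To{\phi}p$, and I would upgrade this to $Rf = \To{\phi}f$ for all $f \in X$ by a density argument: choosing polynomials $p_n \to f$ in $X$, the continuity of the embeddings $X \hookrightarrow \hil^2$ and (via \thref{criterion2lemma}) $X \hookrightarrow \hb$, together with boundedness of $\To{\phi} : X \to \hil^2$, forces $\To{\phi}p_n = Rp_n \to Rf$ in $\hil^2$ while also $\To{\phi}p_n \to \To{\phi}f$, hence $Rf = \To{\phi}f$.

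For $(i) \Leftrightarrow (ii)$ I would argue as follows. Writing $\iota : X \to \hb$ for the inclusion and $j : X \to \hil^2$ for the (compact) inclusion, the previous step identifies $V\iota$ with $(j, \To{\phi}) : X \to \hil^2 \oplus \hil^2$. A bounded operator into a direct sum of two Banach spaces is compact precisely when both of its coordinates are, so since $j$ is compact, $V\iota$ is compact if and only if $\To{\phi} : X \to \hil^2$ is. And since $V$ is an isometry with bounded inverse on its range, $\iota = (V|_{\hb})^{-1}(V\iota)$ is compact if and only if $V\iota$ is. Chaining these equivalences gives $(i) \Leftrightarrow (ii)$.

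For $(ii) \Leftrightarrow (iii)$ I would invoke Schauder's theorem — a bounded operator is compact if and only if its adjoint is — in combination with the adjoint identities already established in the excerpt: the adjoint of $\To{\phi} : X \to \hil^2$ is $M_\phi : \hil^2 \to X^*$ (this is the content of $\ip{\To{\phi}z^n}{z^m} = \conj{\phi_{n-m}} = \ip{z^n}{M_\phi z^m}$, checked on the dense set of polynomials), and since $\hil^2$ is reflexive the second adjoint $M_\phi^* : X^{**} \to \hil^2$ restricts on $X$ to $\To{\phi}$. Thus if $M_\phi$ is compact so is $M_\phi^*$, hence so is its restriction $\To{\phi}$; conversely if $\To{\phi}$ is compact then $M_\phi = \To{\phi}^*$ is compact by Schauder.

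The argument is essentially bookkeeping, so I do not anticipate a real obstacle; the one spot deserving attention is the passage from $Rp = \To{\phi}p$ on polynomials to $Rf = \To{\phi}f$ on all of $X$, since it is this identity — rather than the mere norm formula on polynomials — that lets the direct-sum factorization be applied verbatim, and in the $(ii) \Leftrightarrow (iii)$ step one must note that compactness survives restriction from $X^{**}$ to $X$, which is why reflexivity of $\hil^2$ is all that is needed and no Banach-space geometry of $X$ enters.
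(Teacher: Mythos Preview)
Your proof is correct. For $(ii) \Leftrightarrow (iii)$ you and the paper do the same thing (Schauder). For $(i) \Leftrightarrow (ii)$ the paper argues directly with sequences: it takes a weakly null sequence of polynomials in $X$ and reads off the conclusion from the polynomial norm formula \eqref{hbPolynomialNormFormula} in each direction, using the hypothesis that $X \hookrightarrow \hil^2$ is compact only in the step $(ii) \Rightarrow (i)$. Your route is a structural repackaging of the same idea: you promote the identity $p_+ = \To{\phi}p$ from polynomials to all of $X$, then recognize the embedding $\iota$ as the composition of the isometry $V : f \mapsto (f, f_+)$ with the pair $(j, \To{\phi})$, and invoke the trivial fact that an operator into a direct sum is compact iff both coordinates are. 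This buys you a one-line equivalence in place of two separate sequential arguments, and it makes transparent exactly where the assumed compactness of $j : X \hookrightarrow \hil^2$ enters. The paper's version has the minor advantage that it never needs to extend $R = \To{\phi}$ beyond polynomials, since density plus the norm formula on $\Po$ suffices for the sequential test; your density step is correct but is the only place any care is required, as you yourself flag.
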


\begin{proof} 
The equivalence $(ii) \Leftrightarrow (iii)$ is a well-known property of the adjoint operation. 

Assume that $(i)$ holds. Then we already know from \thref{criterion2lemma} that $\To{\phi}: X \to \hil^2$ is bounded, and we want to show that this operator is also compact. We verify this by showing that $\To{\phi}$ takes weakly convergent sequences to norm convergent ones. By density, it suffices to consider sequences of polynomials. So, take a sequence $\{p_n\}_n$ of polynomials weakly convergent to $0$ in $X$. We have \[ \limsup_{n \to \infty} \|\To{\phi}p_n\|_2 \leq \limsup_{n \to \infty} \|p_n\|_{\hb} = 0\] by \eqref{hbPolynomialNormFormula} and compactness of the embedding $X \hookrightarrow \hb$. We have established $(i) \Rightarrow (ii)$. 

We prove the converse implication $(ii) \Rightarrow (i)$ by showing that a sequence $\{p_n\}_n$ of polynomials which is bounded in the norm of $X$ converges to zero in the norm of $\hb$. Since $X$ is compactly embedded in $\hil^2$, we have $\lim_{n \to \infty} \| p_n\|_2 = 0$. Moreover, the compactness of $\To{\phi}: X \to \hil^2$ implies that $\lim_{n \to \infty} \|\To{\phi} p_n\|_2 = 0$. Now \eqref{hbPolynomialNormFormula} yields convergence of $\hb$-norms to zero.
\end{proof}

\section{Direct applications: Hardy spaces and their duals} \label{HardyApplicationSection}

By \thref{criterion2lemma}, the containment problem $X \subset \hb$ has been reduced to identification of some Cauchy duals and to characterizing the multipliers of $\hil^2$ into the dual. In some cases this is an easy task.

\subsection{Hardy spaces} If $p < 2$, then $\hil^p$ is strictly larger than $\hb$ (which is a subset of $\hil^2$), but if $p \geq 2$, then the containment $\hil^p \subseteq \hb$ might hold. For $p = 2$ equality holds if and only if $\phi = b/a$ is bounded in $\D$. This result goes back to Sarason and can be deduced from a comparison of the reproducing kernels of the involved spaces.

For finite $p > 1$, the Cauchy dual of the Hardy space $\hil^p$ is simply $\hil^q$, where $q = \frac{p}{p-1}$ is the Hölder conjugate index.  According to our multiplier criterion, to understand when $\hil^p \subset \hb$ holds, we need to characterize the space of multipliers from $\hil^2$ to $\hil^q$. The following proposition, well-known and elementary, does this job.

\begin{prop} If $1 < q < 2$, then the multiplication operator $\M_\phi: \hil^2 \to \hil^q$ is bounded if and only if $\phi \in \hil^{\tilde{q}}$, where \[ \tilde{q} = \frac{2q}{2-q}.\]
The operator $\M_\phi: \hil^2 \to \hil^q$ is not compact unless $\phi \equiv 0$.
\end{prop}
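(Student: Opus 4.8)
The plan is to prove the boundedness equivalence by a straightforward application of Hölder's inequality together with the elementary fact that pointwise multiplication $\hil^{r_1} \cdot \hil^{r_2} \subset \hil^{r_3}$ when $1/r_3 = 1/r_1 + 1/r_2$, and conversely to extract the necessary integrability of $\phi$ by testing the multiplication operator against an explicit family of functions. For the easy direction, suppose $\phi \in \hil^{\tilde q}$ with $\tilde q = 2q/(2-q)$; then for $f \in \hil^2$ the product $\phi f$ lies in $\hil^q$ because $1/\tilde q + 1/2 = 1/q$, and $\|\phi f\|_q \le \|\phi\|_{\tilde q}\|f\|_2$, so $M_\phi$ is bounded with norm at most $\|\phi\|_{\tilde q}$. (One should note $\phi \in \N^+$ is given, so once we know $\phi$ has boundary values in $L^{\tilde q}(\T)$ it is automatically in the Hardy space $\hil^{\tilde q}$ — this is where the Smirnov hypothesis is used.)

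For the converse, assume $M_\phi : \hil^2 \to \hil^q$ is bounded, say with norm $C$. The cleanest way I would proceed is to test on reproducing-kernel type functions: for $w \in \D$ put $f_w(z) = (1-|w|^2)^{1/2}/(1-\conj w z)$, which has $\|f_w\|_2 = 1$. Then $\|\phi f_w\|_q \le C$ for all $w$. A standard computation (or a subharmonicity/Hardy–Littlewood estimate) bounds $|\phi(w)|$ from below in terms of a weighted integral of $|\phi f_w|^q$ over $\T$, yielding a pointwise growth bound $|\phi(w)| \lesssim C (1-|w|^2)^{-1/q'}$ for an appropriate exponent; iterating or, more efficiently, integrating the inequality $\|\phi f_w\|_q^q \le C^q$ against $dA(w)$ and applying Fubini with the known $L^q$ asymptotics of $\int_\T |1-\conj w\zeta|^{-q}\,dm(\zeta)$ should recover $\int_\T |\phi|^{\tilde q}\,dm < \infty$. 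Alternatively — and perhaps more transparently — one can test directly against $f(z) = (1-z)^{-s}$ for a suitable real exponent $s \in (0,1/2)$ chosen so that $f \in \hil^2$, combined with the classical integral asymptotics $\int_\T |1-\zeta|^{-t}\,dm(\zeta) \asymp 1$ for $t<1$, to detect exactly the failure of $L^{\tilde q}$-integrability of $\phi$ near any prescribed boundary point. Either route is routine once the correct exponent bookkeeping is done; I would present whichever gives the shorter writeup, most likely the kernel-testing argument since it is coordinate-free.

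For the non-compactness assertion, assume $\phi \not\equiv 0$ and that $M_\phi : \hil^2 \to \hil^q$ is bounded (hence $\phi \in \hil^{\tilde q}$, $\phi \ne 0$). Pick a Lebesgue point $\zeta_0 \in \T$ of $|\phi|$ with $|\phi(\zeta_0)| =: \delta > 0$ (such a point exists since $\phi$'s boundary function is not a.e. zero, $\phi$ being a nonzero $\hil^{\tilde q}$ function). Consider the normalized kernels $f_{w_n}$ with $w_n \to \zeta_0$ nontangentially; then $f_{w_n} \to 0$ weakly in $\hil^2$. If $M_\phi$ were compact we would need $\|\phi f_{w_n}\|_q \to 0$. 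But the mass of $|f_{w_n}|^2\,dm$ concentrates at $\zeta_0$ as $n\to\infty$, so $\|\phi f_{w_n}\|_q^q = \int_\T |\phi|^q |f_{w_n}|^q\,dm$ stays bounded below by a positive constant comparable to $\delta^q$ (using the Lebesgue point property to replace $|\phi|$ by $\delta$ on a shrinking arc carrying almost all the mass, and a Hölder step to pass between the $L^2$-concentration of $|f_{w_n}|^2$ and the $L^q$ integral). This contradicts compactness.

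The main obstacle is the exponent bookkeeping in the converse direction: one must verify that the threshold $\tilde q = 2q/(2-q)$ is exactly the integrability forced by the $\hil^2 \to \hil^q$ bound, neither more nor less, which requires care with the duality/Hölder relation $1/q = 1/2 + 1/\tilde q$ and with the precise boundary growth rates of the test functions. Everything else is standard Hardy-space machinery.
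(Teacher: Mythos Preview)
Your sufficiency direction is fine and matches the paper. But both your converse argument and your non-compactness argument have genuine gaps.

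\textbf{Converse.} Neither of your kernel-testing strategies recovers the sharp exponent $\tilde q$. If you integrate $\|\phi f_w\|_q^q \le C^q$ over $dA(w)$ and apply Fubini, the inner integral
\[
\int_\D \frac{(1-|w|^2)^{q/2}}{|1-\conj w\zeta|^{q}}\,dA(w)
\]
is a \emph{finite constant} in $\zeta$ (since $q<2$), so you only conclude $\phi\in L^q$, not $L^{\tilde q}$. The pointwise route is no better: evaluating $\phi f_w$ at $w$ gives $|\phi(w)|\lesssim (1-|w|)^{1/2-1/q}$, which yields $\phi\in\hil^s$ for every $s<\tilde q$ but not $s=\tilde q$. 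Testing on $(1-z)^{-s}$ has the same defect. The paper avoids all of this by a one-line duality argument: once $M_\phi$ is known to act $\L^2(\T)\to\L^q(\T)$ (via outer-function substitution), one observes that every $f$ in the H\"older conjugate space $\L^{\tilde q'}(\T)$ factors as $f=h\conj g$ with $\|f\|_{\tilde q'}=\|h\|_2\|g\|_p$, so $\big|\int_\T \phi f\,d\m\big|\le C\|f\|_{\tilde q'}$ and hence $\phi\in(\L^{\tilde q'})^*=\L^{\tilde q}$. This factorization is what your ``exponent bookkeeping'' was missing.

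\textbf{Non-compactness.} Your normalized kernels $f_{w_n}$ do \emph{not} detect non-compactness here. For $q<2$ one has
\[
\|f_{w}\|_q^q = \int_\T \frac{(1-|w|^2)^{q/2}}{|1-\conj w\zeta|^{q}}\,d\m(\zeta)\asymp (1-|w|)^{1-q/2}\to 0,
\]
so even for $\phi\equiv 1$ (where $M_\phi$ is the non-compact inclusion $\hil^2\hookrightarrow\hil^q$) your test sequence satisfies $\|\phi f_{w_n}\|_q\to 0$. The ``mass concentration'' intuition you invoke is an $L^2$ phenomenon; the $L^q$-mass of the Poisson-type kernel vanishes when $q<2$. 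The paper's argument is much cheaper: the monomials $z^n\to 0$ weakly in $\hil^2$, while $\|\phi z^n\|_q=\|\phi\|_q$ is a nonzero constant whenever $\phi\not\equiv 0$.
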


\begin{proof}
    The sufficiency of the condition $\phi \in \hil^{\tilde{q}}$ follows readily from Hölder's inequality. Conversely, if $M_\phi: \hil^2 \to \hil^q$ is bounded, then clearly $\phi$ defines also a multiplier from $\L^2(\T)$ to $\L^q(\T)$. Consequently, for some constant $C > 0$ and the Hölder conjugate index $p = \frac{q}{q-1}$, we have \[ \int_\T \phi h \conj{g} d\m \leq C\|h\|_2 \|g\|_p, \quad h \in \L^2(\T), g \in \L^p(\T).\] If $\tilde{p} = \frac{2p}{p-2}$, then one checks readily that each $f \in \L^{\tilde{p}}(\T$) is of the form $f = h\conj{g}$ for some $h \in \L^2(\T)$ and $g \in \L^p(\T)$, with $\|f\|_{\tilde{p}} = \|h\|_2 \|g\|_p$. The above expression then shows that $\phi$ lies in the dual space to $\L^{\tilde{p}}(\T)$, this dual being $\L^{\tilde{q}}(\T)$. Thus $\phi \in \hil^{\tilde{q}}$.

    As for compactness, the sequence $\{z^n\}_n$ is weakly convergent to $0$ in $\hil^2$, yet the norm of $\phi z^n$ in $\hil^q$ remains constant, and so $M_\phi: \hil^2 \to \hil^q$ cannot be compact unless $\phi \equiv 0$.
\end{proof}

We have reached our first main result, describing containment of Hardy spaces in $\hb$.

\begin{cor} \thlabel{HardyContainmentCor} For $p \in (2,\infty)$, we have that $\hil^p \subset \hb$ if and only if $\phi = b/a \in \hil^{\tilde{p}}$, where \[\tilde{p} = \frac{2p}{p-2}.\] The containment $\hil^p \subset \hb$ is never compact.    
\end{cor}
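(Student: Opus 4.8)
The plan is to feed $X = \hil^p$ into the multiplier criterion of \thref{criterion2lemma}. First I would check the hypotheses: for $p \in (2,\infty)$ the analytic polynomials are norm-dense in $\hil^p$, and $\hil^p$ admits a Cauchy dual with respect to the pairing \eqref{CauchyDualEq}, namely $(\hil^p)^* = \hil^q$ with $q = p/(p-1)$ the Hölder conjugate exponent. Since $p > 2$ we have $1 < q < 2$, which places us precisely in the regime covered by the Proposition immediately preceding the corollary.

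Next, \thref{criterion2lemma} reduces the containment $\hil^p \subset \hb$ to the boundedness of $M_\phi : \hil^2 \to \hil^q$, and that preceding Proposition identifies this in turn with the membership $\phi \in \hil^{\tilde q}$, where $\tilde q = \frac{2q}{2-q}$. To finish the equivalence of $(i)$ and $(ii)$ it only remains to verify that $\tilde q = \tilde p$; this is a one-line computation, since $2 - q = 2 - \tfrac{p}{p-1} = \tfrac{p-2}{p-1}$ yields $\tilde q = \frac{2q}{2-q} = \frac{2p/(p-1)}{(p-2)/(p-1)} = \frac{2p}{p-2} = \tilde p$.

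For the non-compactness assertion I would not invoke the compact-embedding proposition directly, since its standing hypothesis that $\hil^p \hookrightarrow \hil^2$ be compact fails here; instead I would use the elementary observation that a compact embedding $\hil^p \hookrightarrow \hb$, composed with the contractive inclusion $\hb \hookrightarrow \hil^2$, would force $\hil^p \hookrightarrow \hil^2$ to be compact. But that inclusion is not compact: the monomials $\{z^n\}_n$ form a norm-one, weakly null sequence in $\hil^p$ (for $1 < p < \infty$) with $\|z^n\|_2 = 1$ for every $n$, so no subsequence of them converges in $\hil^2$. Hence $\hil^p \hookrightarrow \hb$ can never be compact. I expect no genuine obstacle in this argument; the only points requiring a little care are the exponent bookkeeping $\tilde q = \tilde p$ and the realization that the non-compactness must be deduced from this factorization rather than from the compactness criterion for $M_\phi$.
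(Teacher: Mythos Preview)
Your argument is correct and matches the paper's intended route exactly: apply \thref{criterion2lemma} with $X=\hil^p$, $X^*=\hil^q$, then invoke the preceding multiplier proposition and check $\tilde q=\tilde p$. For the non-compactness the paper simply reads off the last line of that proposition (that $M_\phi:\hil^2\to\hil^q$ is never compact unless $\phi\equiv 0$); your factorization through $\hb\hookrightarrow\hil^2$ is an equally valid alternative, and your caution about the standing hypothesis in the compact-embedding criterion is well placed, though in fact the implication $(i)\Rightarrow(iii)$ there does not use that hypothesis and would also suffice.
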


\subsection{Spaces of bounded mean oscillation}
Letting $p \to +\infty$ in \thref{HardyContainmentCor}, one might guess that $\hil^\infty \subset \hb$ if and only if $\phi \in \hil^2$. This fact has been proved by Sarason in \cite{sarason1986doubly}. In fact, much more can be said.

\begin{prop} \thlabel{BMOAContProp}
    Let $\phi = b/a$. The following statements are equivalent:
    \begin{enumerate}[(i)]
        \item $\phi \in \hil^2$,
        \item $\hil^\infty \subset \hb$
        \item $\A := \hil^\infty \cap C(\T) \subset \hb$,
        \item $\VMOA \subset \hb$,
        \item $\BMOA \subset \hb$.
    \end{enumerate}
\end{prop}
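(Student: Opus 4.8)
The plan is to notice that, among the five statements, every implication but two is immediate from continuous inclusions of spaces: since $\A \hookrightarrow \hil^\infty \hookrightarrow \BMOA$ and $\A \hookrightarrow \VMOA \hookrightarrow \BMOA$, one gets at once $(v) \Rightarrow (iv) \Rightarrow (iii)$ and $(v) \Rightarrow (ii) \Rightarrow (iii)$ (and $(i) \Leftrightarrow (ii)$ is already Sarason's \thref{sarasonHinftyThm}). It therefore suffices to close the cycle by proving $(iii) \Rightarrow (i)$ and $(i) \Rightarrow (v)$. The first is the easy direction: assuming $\A \subset \hb$, the inclusion $\A \hookrightarrow \hb$ is a closed operator between two spaces continuously contained in $\hil^2$, so the closed graph theorem yields a constant $C$ with $\|g\|_{\hb} \le C\|g\|_\infty$ for $g \in \A$; specializing to $g = z^n$ and invoking the norm formula \eqref{hbMonomialNormFormula} gives $1 + \sum_{k=0}^n|\phi_k|^2 \le C^2$ for every $n$, hence $\phi \in \hil^2$.

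For the substantive implication $(i) \Rightarrow (v)$ — which is where the improvement over \thref{sarasonHinftyThm} lies — I would first establish the polynomial estimate $\|p\|_{\hb} \le C\|p\|_{\BMOA}$ for $p \in \Po$. By \eqref{hbPolynomialNormFormula} this reduces to bounding $\|\To{\phi}p\|_2$. Writing $\|\To{\phi}p\|_2 = \sup\{\, |\ip{\To{\phi}p}{q}_2| : q \in \Po,\ \|q\|_2 \le 1 \,\}$ and using the matrix identity \eqref{ToeplitzPhiMatrixEq} to rewrite $\ip{\To{\phi}p}{q}_2 = \int_\T p\,\conj{\phi q}\,d\m$ — a legitimate boundary integral since $p \in \hil^\infty$ and $\phi q \in \hil^1$ once $\phi \in \hil^2$ — the duality $\BMOA = (\hil^1)^*$ with the Cauchy pairing, followed by Cauchy--Schwarz, gives
\[ |\ip{\To{\phi}p}{q}_2| \,\le\, C\,\|p\|_{\BMOA}\,\|\phi q\|_1 \,\le\, C\,\|p\|_{\BMOA}\,\|\phi\|_2\,\|q\|_2 . \]
Hence $\|\To{\phi}p\|_2 \le C\|\phi\|_2\|p\|_{\BMOA}$, and combined with $\|p\|_2 \le C\|p\|_{\BMOA}$ (as $\BMOA \hookrightarrow \hil^2$) and \eqref{hbPolynomialNormFormula} this is the claimed estimate. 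The point is that the bound is ``soft'': it uses only Cauchy--Schwarz and $\hil^1$--$\BMOA$ duality and therefore does not degenerate at Sarason's endpoint $\hil^\infty$. Equivalently, the same computation read through \thref{criterion2lemma} applied to $X = \VMOA$ — whose Cauchy dual is $\hil^1$ — merely asserts boundedness of $M_\phi : \hil^2 \to \hil^1$, which is again just Cauchy--Schwarz.

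It then remains to upgrade the polynomial estimate to all of $\BMOA$. Given $f \in \BMOA$, let $\sigma_N f \in \Po$ denote its Fej\'er means. Convolution with the Fej\'er kernel, being convolution with a probability density, contracts the $\BMO$ seminorm and preserves $\widehat{f}(0)$, so $\|\sigma_N f\|_{\BMOA} \le \|f\|_{\BMOA}$, and by the polynomial estimate the sequence $\{\sigma_N f\}_N$ is bounded in the Hilbert space $\hb$. Since $\sigma_N f \to f$ uniformly on compact subsets of $\D$ and point evaluations are bounded on $\hb$, every weak subsequential limit of $\{\sigma_N f\}_N$ in $\hb$ must equal $f$; hence $f \in \hb$ with $\|f\|_{\hb} \le C\|f\|_{\BMOA}$, establishing $(i) \Rightarrow (v)$ and closing the cycle.

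I expect the one point demanding care to be precisely this last approximation step — the $\BMOA$-boundedness of the Fej\'er means and the identification of weak $\hb$-limits by their values on $\D$. Both facts are standard, and the mechanism can be isolated as a general principle deducing $X^{**} \subset \hb$ from $X \subset \hb$ (here $X = \VMOA$, $X^{**} = \BMOA$, so that the hypothesis $\phi\in\hil^2$ enters only via the elementary $\VMOA$-case of \thref{criterion2lemma}); this is the viewpoint to be formalized in \thref{ContainmentImprovementProp}, and it also makes transparent why $\A$ and $\VMOA$ may be substituted for $\BMOA$ in the statement.
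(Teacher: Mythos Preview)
Your proof is correct and coincides with the paper's \emph{second} proof of $(i)\Rightarrow(v)$: the bound $\|\To{\phi}p\|_2 \le C\|\phi\|_2\|p\|_{\BMOA}$ via the $\hil^1$--$\BMOA$ pairing is exactly the multiplier criterion applied to $X=\VMOA$ (as you yourself observe), and your Fej\'er-mean upgrade to general $f\in\BMOA$ is the paper's weak-limit argument verbatim, with the same observation that $\sigma_N(P_+u)=P_+(\sigma_N u)$ keeps the $\BMOA$-norms bounded. The $(iii)\Rightarrow(i)$ step via \eqref{hbMonomialNormFormula} also matches.

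It is worth knowing that the paper offers, in addition, a shorter \emph{first} proof of $(i)\Rightarrow(v)$ that bypasses duality and approximation entirely: given $f=P_+u$ with $u\in\L^\infty(\T)$, one simply sets $f_+ := P_+(\conj{\phi}\,u)$, which lies in $\hil^2$ because $\phi\in\hil^2$, and then checks directly that $\To{a}f_+ = P_+(\conj{a\phi}\,u) = P_+(\conj{b}\,u) = P_+(\conj{b}\,P_+u) = \To{b}f$, so \thref{hbcontainment} gives $f\in\hb$ at once. Your route has the advantage of illustrating the general machinery of \thref{criterion2lemma} and \thref{ContainmentImprovementProp}; the paper's first route has the advantage of being a two-line computation that does not invoke Fefferman duality or any approximation step.
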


We shall give two different proofs of the containment $\BMOA \subset \hb$ given the assumption $(i)$ above. One based on a direct analysis of \eqref{hbEquation}, and one making use of our multiplier criterion developed earlier.

Here is the first proof. Recall that $f \in \BMOA$ if and only if it can be expressed as \[f = P_+u,\] where $u \in \L^\infty(\T)$ and where $P_+$ is the orthogonal projection $\L^2(\T) \to H^2$. Under the assumption that $\phi = b/a \in \hil^2$, the solution $f_+$ to the operator equation \eqref{hbEquation} is simply \[ f_+ = P_+ \conj{\phi} u.\] Indeed, note first that $\conj{\phi} u \in \L^2(\T)$, so that the formula above defines an element $f_+ \in \hil^2$. Next, we verify that \eqref{hbEquation} indeed holds for this choice of $f_+$:
\[   \To{a}(P_+\conj{\phi}u) = P_+(\conj{a \phi}u) = P_+ (\conj{b} u)  = P_+ (\conj{b} P_+u)  = \To{b}f.
\]

This shows that $\BMOA \subset \hb$ if $\phi \in \hil^2$, proving $(i) \Rightarrow (v)$ in \thref{BMOAContProp}.  The converse direction $(v) \Rightarrow (i)$ follows already from Sarason's theorem which asserts the equivalence of $(i)$ and $(ii)$ above, since it is well-known that $\hil^\infty \subset \BMOA$. In fact, that all the other four statements separately imply $(i)$ follows easily from the monomial norm formula in \eqref{hbMonomialNormFormula}. Indeed, statements $(ii), (iv)$ and $(v)$ all separately imply $(iii)$, while this statement together with the closed graph theorem gives us that \[ \|z^n\|_{\hb} \leq C \|z^n\|_{\A} = C\] for some constant $C > 0$. This proves $(i)$, since by \eqref{hbMonomialNormFormula}, we have \[\|\phi\|_2 + 1= \lim_{n \to \infty} \|z^n\|_{\hb}.\] This concludes the proof of \thref{BMOAContProp}. 

Here is a second proof of the containment $\BMOA \subset \hb$ under the condition $(i)$. We start by noting that if we set $X = \VMOA$, then it is well-known from the works of Fefferman and Sarason that $X^* = \hil^1$, and so by our \thref{criterion2lemma}, the containment $\VMOA \subset \hb$ is equivalent to $\phi$ being a multiplier from $\hil^2$ into $\hil^1$. We are assuming that $\phi \in \hil^2$, and it is well-known that $\hil^2 \cdot \hil^2 = \hil^1$. Thus we obtain $\VMOA \subset \hb$. Now, it is a fact that each function $f \in \BMOA$ can be approximated by a sequence of polynomials $\{p_n\}_n$ in the following way: $p_n(z) \to f(z)$ for all $z \in \D$, and \[\sup_n \|p_n\|_{\VMOA} = \sup_n \|p_n\|_{\BMOA} < \infty\] (for instance, if $f = P_+u$, $u \in \L^\infty(\T)$, then we may take $p_n = P_+ u_n$, where $u_n$ are the Ces\`aro means of $u$). But then, $\sup_n \|p_n\|_{\hb} < \infty$ follows from the closed graph theorem and the containment $\VMOA \subset \hb$, and so $f$ can be identified as a weak limit of the polynomials $\{p_n\}_n$, as it was done in the proof of \thref{criterion1lemma}. Thus $f \in \hb$, and we have another proof of the implication $(i) \Rightarrow (v)$ in \thref{BMOAContProp} above. 

Clearly, this last argument can be formulated in a more general setting. For any Banach space of analytic functions in $\D$ we assume that the evaluations at points of $\D$ are bounded in the norm of $X$.

\begin{prop} \thlabel{ContainmentImprovementProp} Let $X$ and $\hil$ be, respectively, a Banach and a Hilbert space of analytic functions in $\D$. If $X \subset \hil$, then we also have $\conj{X} \subset \hil$, where \[ \conj{X} := \{ f \in \hd : f(z) = \lim_{z \in \D} p_n(z), p_n \in \Po, \, \sup_{n} \|p_n\|_X < \infty \}. \]
\end{prop}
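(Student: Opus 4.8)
\subsection*{Proof proposal}

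The plan is to reduce everything to the weak-limit device already used in the proof of \thref{criterion1lemma}, carried out this time inside the Hilbert space $\hil$ rather than inside $\hb$.

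First I would upgrade the set-theoretic inclusion $X \subset \hil$ to a continuous one. Since $X$ is Banach and $\hil$ is Hilbert, hence Banach, and since both carry bounded point evaluations, the inclusion map $X \hookrightarrow \hil$ has closed graph: if $g_n \to g$ in $X$ while $g_n \to h$ in $\hil$, then for every $z \in \D$ we get $g(z) = \lim_n g_n(z) = h(z)$, so $g = h$. By the closed graph theorem there is a constant $C > 0$ with $\|g\|_\hil \leq C\|g\|_X$ for all $g \in X$.

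Next, fix $f \in \conj{X}$ and pick polynomials $p_n \in \Po$ with $p_n(z) \to f(z)$ for every $z \in \D$ and $M := \sup_n \|p_n\|_X < \infty$. Each $p_n$ lies in $X$ (its $X$-norm is finite), so the bound above gives $\|p_n\|_\hil \leq CM$ for all $n$; thus $\{p_n\}_n$ is norm-bounded in the Hilbert space $\hil$ and therefore has a subsequence $\{p_{n_k}\}_k$ converging weakly in $\hil$ to some $g \in \hil$. Because point evaluations on $\hil$ are bounded linear functionals (realized through the reproducing kernel of $\hil$), weak convergence yields $p_{n_k}(z) \to g(z)$ for each $z \in \D$. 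Comparing with $p_{n_k}(z) \to f(z)$ we conclude $f = g \in \hil$, which is exactly the asserted inclusion $\conj{X} \subset \hil$.

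There is no genuine obstacle in this argument; it is the same passage-to-a-weak-limit trick as in \thref{criterion1lemma}. The only two points that need to be handled with a little care are the continuity of the inclusion $X \subset \hil$ (supplied by the closed graph theorem, using bounded point evaluations on both spaces) and the identification of the weak limit $g$ with the pointwise limit $f$ (supplied by the reproducing kernel property of $\hil$).
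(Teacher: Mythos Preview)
Your proof is correct and follows exactly the approach the paper intends: the proposition is stated without proof, the authors simply noting that ``this last argument can be formulated in a more general setting,'' where ``this last argument'' is precisely the closed-graph-plus-weak-limit trick you spell out (and which they used just above to deduce $\BMOA \subset \hb$ from $\VMOA \subset \hb$). Your care in justifying the continuity of the inclusion via bounded point evaluations matches the paper's standing convention that evaluations are bounded on all spaces considered.
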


\section{More complicated matters}
\label{section4}

Let $\Di$ denote the Dirichlet space. Explicit and readily-verifiable conditions ensuring $\Di \subset \hb$ are more difficult to reach than their analogues for Hardy spaces. 

\subsection{Weighted spaces of Taylor series} \label{WeighedTaylorSeriesSpacesSection} The condition for the containment $\Di \subset \hb$ is the same as a corresponding condition for a larger family of Hilbert spaces $\hil^2(w)$ to which the Dirichlet space belongs. Here $w = \{ w_n\}_{n\geq 0}$ is an non-decreasing sequence of positive numbers, and \[ \hil^2(w) := \{ f \in \hd : \|f \|_{\hil^2(w)} < \infty \}, \] where the norm is defined in terms of the sequence $\{f_n\}_{n \geq 0}$ of Taylor coefficients of $f$, centered at $z = 0$, by the expression \[ \| f\|^2_{\hil^2(w)} := \sum_{n=0}^\infty w_n |f_n|^2 < \infty.\] 
The Cauchy dual of $\hil^2(w)$ is a space $\hil^2(w^*)$ of the same type, with \[ w^* := \{ 1/w_n \}_{n \geq 0}.\] Here we need to implicitly assume that $\lim_{n \to \infty} w_n^{1/n} = 1$ in order for $\hil^2(w)$ and $\hil^2(w^*)$ to consist of power series with radius of convergence at least $1$ (that is, to consist of functions analytic in $\D$). 

Our multiplier criterion in \thref{criterion2lemma} is particularly useful if the norm on $\hil^2(w^*)$ can be realized using an equivalent integral norm of the form \begin{equation}
    \label{h2wStarNormEquivalenceEquation}\| f\|^2_{\hil^2(w^*)} \simeq \int_\D |f(z)|^2 G(|z|) dA(z) = \sum_{n=0}^\infty |f_n|^2 G_n 
\end{equation} where $G: (0,1) \to (0,\infty)$ is some positive function and \begin{equation}
    \label{momentsG} G_n := 2 \int_0^1 G(r) r^{2n+1} dr
\end{equation} is the sequence of moments of $G$. Above ``$\simeq$'' means that the two expressions are of comparable size, independently of $f$. We have the following consequence of our multiplier criterion.

\begin{prop} \thlabel{CarlesonMeasureContainmentCharProp} Let $w$ be a weight sequence for which \eqref{h2wStarNormEquivalenceEquation} holds for some $G$. If $\phi = b/a$, then $\hil^2(w) \subset \hb$ if and only if \[d\mu(z) = |\phi(z)|^2G(|z|)dA(z)\] is a Carleson measure. The embedding $\hil^2(w) \hookrightarrow \hb$ is compact if and only if $d\mu(z)$ is a vanishing Carleson measure.
\end{prop}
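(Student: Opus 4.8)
The plan is to apply the multiplier criterion in \thref{criterion2lemma} together with the characterization of Cauchy duals of weighted sequence spaces. By hypothesis the Cauchy dual of $X = \hil^2(w)$ is $X^* = \hil^2(w^*)$, whose norm is equivalent to the integral norm $\int_\D |f(z)|^2 G(|z|)\,dA(z)$ via \eqref{h2wStarNormEquivalenceEquation}. Thus, by \thref{criterion2lemma}, the containment $\hil^2(w) \subset \hb$ is equivalent to the boundedness of the multiplication operator $M_\phi : \hil^2 \to \hil^2(w^*)$. Writing out the target norm using \eqref{h2wStarNormEquivalenceEquation}, boundedness of $M_\phi$ means precisely that there is a constant $C > 0$ with
\[
\int_\D |\phi(z) f(z)|^2 G(|z|)\,dA(z) \leq C \|f\|_2^2, \quad f \in \hil^2.
\]
The left-hand side is $\int_\D |f(z)|^2 \,d\mu(z)$ with $d\mu(z) = |\phi(z)|^2 G(|z|)\,dA(z)$, so this inequality is exactly the statement that $d\mu$ is a Carleson measure for $\hil^2$. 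This yields the equivalence in the bounded case.

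For the compact case I would run the same argument through the compact embedding criterion (the unnamed Proposition immediately following \thref{criterion2lemma} in the excerpt). First one should check its standing hypothesis, namely that $\hil^2(w) \hookrightarrow \hil^2$ is compact: since $w$ is non-decreasing and $\hil^2(w) \subset \hb \subset \hil^2$ forces, via the monomial norms, that $w_n \to \infty$ (if $w$ were bounded then $\hil^2(w) = \hil^2$ and the Carleson condition would fail for any nonzero $\phi$, as $|\phi|^2 G\,dA$ has infinite mass — one can dispatch this degenerate case separately), and a diagonal weighted shift with $w_n \to \infty$ gives a compact inclusion into $\hil^2$. Granting this, the compact embedding criterion says $\hil^2(w) \hookrightarrow \hb$ is compact if and only if $M_\phi : \hil^2 \to \hil^2(w^*)$ is compact, and the latter is equivalent — again through \eqref{h2wStarNormEquivalenceEquation} — to $d\mu$ being a \emph{vanishing} Carleson measure, by the standard characterization of compact embeddings $\hil^2 \hookrightarrow L^2(d\mu)$.

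The only genuinely delicate point is verifying that $M_\phi : \hil^2 \to \hil^2(w^*)$ is compact precisely when $d\mu$ is vanishing Carleson; this is not quite immediate from the abstract criterion because one must know that the integral norm on $\hil^2(w^*)$ is genuinely \emph{equivalent} (not merely comparable on a dense set in a way that could distort compactness) to the sequence norm, and that the induced embedding $\hil^2 \hookrightarrow L^2(d\mu)$ having the compactness property transfers back. Since the equivalence in \eqref{h2wStarNormEquivalenceEquation} is assumed to hold as an honest equivalence of norms on all of $\hil^2(w^*)$, compactness of $M_\phi$ into $\hil^2(w^*)$ is the same as compactness of the composition $\hil^2 \xrightarrow{M_\phi} L^2(d\mu)$, and here I would invoke the classical fact (due essentially to the same circle of ideas as Carleson's theorem) that this composition is compact iff $\mu$ is a vanishing Carleson measure. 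I expect this transfer — and the bookkeeping around the degenerate bounded-weight case — to be the main, if modest, obstacle; everything else is a direct substitution into the criteria already established.
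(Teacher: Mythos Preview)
Your approach is essentially the same as the paper's, which dispatches the whole thing in two sentences: invoke the multiplier criterion, recognise the resulting inequality as the Carleson measure condition for $d\mu$, and say the compactness statement is ``established similarly''.

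Your extra scrupulousness about the standing hypothesis of the compact criterion (that $\hil^2(w) \hookrightarrow \hil^2$ be compact) is reasonable, but note that it is in fact automatic under the assumptions of the proposition, and your handling of the ``degenerate bounded-weight case'' is slightly off. The existence of $G$ satisfying \eqref{h2wStarNormEquivalenceEquation} already forces $w_n \to \infty$: for $G_0$ to be finite one needs $r\mapsto rG(r)$ integrable on $(0,1)$, and then dominated convergence gives $G_n \to 0$; since $G_n \simeq 1/w_n$, this yields $w_n \to \infty$, hence the inclusion $\hil^2(w) \hookrightarrow \hil^2$ is compact. So there is no degenerate case to dispatch, and your side remark that ``$|\phi|^2 G\,dA$ has infinite mass'' in that case is not the right reason in any event (a Carleson measure on $\D$ is automatically finite, but boundedness of $w$ simply cannot occur here). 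With that clarification, your argument and the paper's coincide.
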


Here, as usual, a positive measure $\mu$ on the disk $\D$ is a Carleson measure if 
\[ \int_\D |f(z)|^2 d\mu(z) \leq C\|f\|_2^2, \quad f \in \hil^2\] and where $C > 0$ is some positive constant. In other words, $\mu$ is a Carleson measure if we have a continuous embedding $\hil^2 \hookrightarrow \L^2(\mu)$. It is a well-known fact that Carleson measures are precisely those for which, for some $C > 0$, we have a bound
\begin{equation}
    \label{CarlesonMeasCond} \mu\big( S(\theta_0, h)\big) \leq Ch,
\end{equation} and where $S(\theta_0, h)$ is a (curvlinear) Carleson square given by \[ S(\theta_0, h) = \{ z = re^{i\theta} \in \D: |\theta - \theta_0| < h/2, 1-r < h \}, \quad \theta_0 \in [0, 2\pi), h \in (0,1).\] A \textit{vanishing} Carleson measure is one where the right-hand side of \eqref{CarlesonMeasCond} is improved to $o(h)$. The vanishing condition is well-known to be equivalent to compactness of the embedding $\hil^2 \hookrightarrow \L^2(\mu)$.

\begin{proof}[Proof of \thref{CarlesonMeasureContainmentCharProp}] The multiplier criterion tells us that the containment is equivalent to \[ \int_\D |f(z)\phi(z)|^2 G(|z|)dA(z) \leq C \|f\|_{\hil^2},\] which is precisely the Carleson measure condition for $d\mu(z) = |\phi(z)|^2 G(|z|) dA(z)$. The compactness statement is established similarly. 
\end{proof}

\begin{example}
    By setting $G_c(|z|) = \exp\Big( - \frac{c}{1-|z|}\Big)$ for $c > 0$ we obtain a scale of weight sequences $w^c = \{w^c_n\}_{n \geq 0}$ and corresponding spaces $\hil^2(w)$. A rather messy computation of the corresponding moments \eqref{momentsG} of $G_c$ reveals that \begin{equation}
        \label{GevreyUnion} \mathcal{G} = \bigcup_{c > 0} \hil^2(w_c) 
    \end{equation} where $\mathcal{G}$ is the so-called \textit{Gevrey class} consisting of functions $m$ analytic in $\D$ which have a Taylor series expansion $m(z) = \sum_{n = 0}^\infty m_n z^n$ satisfying \[ m_n = O\big( \exp( -c \sqrt{n}) \big)\] for some $c > 0$. By a deep result of Davis and McCarthy from \cite{davis1991multipliers}, the class $\mathcal{G}$ consists precisely of those functions which are multipliers simultaneously for all non-extreme spaces $\hb$. In particular, $\mathcal{G} \subset \hb$ for every non-extreme $b$. We may derive this weaker conclusion immediately from our criterion. Indeed, every function $\phi = b/a \in \N^+$ satisfies the estimate \[ \log|\phi(z)| = o\big( (1-|z|)^{-1}), \quad z \in \D\] See, for instance, \cite{yanagihara1974mean}. But then $|\phi(z)|^2G_c(|z|)$ is bounded for every $c > 0$, so clearly \[ \int_\D |h(z)|^2|\phi(z)|^2 G_c(|z|) dA(z) \leq B \int_\D |h(z)|^2 dA(z) \leq B \|h\|_{\hil^2}, \quad h \in \hil^2,\] where $B > 0$ is some constant depending on $c$. Thus $\hil^2(w^c) \subset \hb$ for every $c > 0$ and every non-extreme $b$, from which it follows by \eqref{GevreyUnion} that $\mathcal{G} \subset \hb$.
\end{example}

\subsection{Dirichlet space} \label{DirichletSubsection} The Dirichlet space $\Di$ corresponds to $\hil^2(w)$ where $w = \{ w_n\}_n$ and $w_n = n+1$. The Cauchy dual of $\Di$ is isometrically equal to the usual (unweighted) Bergman space, which we will denote by $\B$ and which we recall consists of those functions analytic in $\D$ for which
\begin{equation}
    \label{BergmanSpaceDef}
    \|f\|^2_{\B} := \int_\D |f(z)|^2 dA(z) = \sum_{n=0}^\infty \frac{|f_n|^2}{n+1 }< \infty.
\end{equation} Thus $\Di^* = \hil^2(w^*)$ has the form \eqref{h2wStarNormEquivalenceEquation} with $G \equiv 1$. In this case, our Carleson measure condition says that $\Di \subset \hb$ if and only if $|\phi|^2 dA$ is a Carleson measure.

\begin{example} \thlabel{PhicDirichletContainmentExample} Let $c > 0$ and define $b:= b_c : \D \to \D$ by the equation \begin{equation}
    \label{phicDef} \phi_c(z) = \frac{b(z)}{a(z)} = \frac{1}{(1-z)^c}, \quad z \in \D.
\end{equation}
    A simple verification of the Carleson condition in \thref{CarlesonMeasureContainmentCharProp} establishes the following.    
    \begin{enumerate}[(i)]
        \item If $c < 1/2$, then $\hb$ contains $\Di$, and the embedding is compact.
        \item If $c = 1/2$, $\hb$ contains $\Di$, embedding not being compact.
        \item If $c > 1/2$, then $\hb$ does not contain $\Di$.
    \end{enumerate}    
\end{example}

In the next section we will study further the above example, and show precisely how the picture changes if the above $\phi_c$ is replaced by $\theta \phi_c$, where $\theta$ is an appropriate inner function.

\section{A case study: more on containment of the Dirichlet space}

Let us illustrate further how different the Carleson measure criterion appearing in Section \ref{DirichletSubsection} is from that for the Hardy space appearing in \thref{HardyContainmentCor}. In the latter case, only the boundary values of $\phi$ play a role, while in the former case, values inside the disk seem to matter. In particular, a question arises: \emph{to what extent does the inner factor of $\phi = b/a$ (equivalently, inner factor of $b$) play a role in the corresponding containment?} Below, we work out an example.

\label{innerFactorEffectSection}

\subsection{An inner factor that helps} We will extend \thref{PhicDirichletContainmentExample} and prove that the inner factor of $b$ indeed may play a critical role in the containment of the Dirichlet space $\Di$ in $\hb$. We will focus on the singular inner function corresponding to a Dirac measure $\delta_1/2$ at the point $\zeta = 1$: \begin{equation}
    \label{thetaEq} \theta(z) = \exp\Big(-\frac{1}{2} \cdot \frac{1+z}{1-z}\Big), \quad z \in \D.
\end{equation} We will prove the following proposition.

\begin{prop}\thlabel{ThetaPhicDirichletContainmentProp}
    Let $\phi_c$ and $b = b_c$ be as in \thref{PhicDirichletContainmentExample}, and let $\theta$ be given by \eqref{thetaEq}.
    \begin{enumerate}[(i)]
        \item If $c < 1$, then $\hil(\theta b)$ contains $\Di$, and the embedding is compact.
        \item If $c = 1$, then $\hil(\theta b)$ contains $\Di$, the embedding not being compact.
        \item If $c > 1$, then $\hil(\theta b)$ does not contain $\Di$.
    \end{enumerate}
\end{prop}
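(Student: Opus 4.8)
The plan is to reduce the assertion to a Carleson measure estimate via \thref{CarlesonMeasureContainmentCharProp} and then to carry that estimate out. Since $\theta$ is inner, $|\theta b| = |b|$ a.e.\ on $\T$, so $\theta b$ is again a non-extreme point of the unit ball of $\hil^\infty$ (the logarithmic integrability condition depends only on $|b|$), and the Pythagorean mate $a$ of $b$ — the outer function with $|a|^2 = 1-|b|^2$ on $\T$ and $a(0)>0$ — is simultaneously the Pythagorean mate of $\theta b$. Hence the Smirnov function attached to $\hil(\theta b)$ is $\theta b/a = \theta\phi_c$. Because $\Di = \hil^2(w)$ with $w_n = n+1$, its Cauchy dual is the Bergman space $\B$, so \eqref{h2wStarNormEquivalenceEquation} holds with $G\equiv 1$; therefore, by \thref{CarlesonMeasureContainmentCharProp}, $\Di \subset \hil(\theta b)$ holds if and only if
\[ d\mu(z) := |\theta(z)\phi_c(z)|^2\,dA(z) = |\theta(z)|^2\,|1-z|^{-2c}\,dA(z) \]
is a Carleson measure for $\hil^2$, and the embedding is compact precisely when $\mu$ is a vanishing Carleson measure. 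Everything thus reduces to locating the value of $c$ at which this transition happens.

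Next I would record that $|\theta(z)|^2 = \exp\!\big(-\Re\tfrac{1+z}{1-z}\big) = \exp\!\big(-\tfrac{1-|z|^2}{|1-z|^2}\big)$. The density of $\mu$ is bounded on $\{|1-z|\ge\varepsilon\}$, so Carleson squares $S(\theta_0,h)$ whose closures avoid a fixed neighbourhood of $\zeta=1$ satisfy $\mu(S(\theta_0,h)) \lesssim h^2$, which is both $O(h)$ and $o(h)$; the whole question is therefore local at $\zeta=1$, i.e.\ concerns $\theta_0\to 0$. Writing $z = re^{i\theta}$ and $s = 1-r$, near $\zeta=1$ one has $1-|z|^2\simeq s$, $|1-z|^2 \simeq s^2+\theta^2$ and $dA \simeq ds\,d\theta$, so on $S(\theta_0,h)$ (with $\theta_0$ small) the density of $\mu$ is comparable to $(s^2+\theta^2)^{-c}\exp\!\big(-s/(s^2+\theta^2)\big)$ over the box $0<s<h$, $|\theta-\theta_0|<h/2$.

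Finally I would estimate $\mu(S(\theta_0,h))$ by integrating this model density over the box, splitting it into the parts $\theta\lesssim s$ and $\theta\gtrsim s$. On $\{\theta\lesssim s\}$ the exponential is bounded by $e^{-c_0/s}$ for some $c_0>0$, which beats every power of $s$, so that part contributes $o(h^N)$ for all $N$. On $\{\theta\gtrsim s\}$ one integrates in $s$ first, using $\int_0^{\min(h,\theta)} e^{-s/\theta^2}\,ds \simeq \min(\theta^2,h)$, and obtains: for squares centred at the singularity, $\mu(S(0,h)) \simeq \int_0^h \theta^{2-2c}\,d\theta \simeq h^{3-2c}$ when $c<3/2$ (and $\mu(S(0,h))=+\infty$, so $\mu$ is not even a finite measure, when $c\ge 3/2$); while over the off-centre squares the maximum of $\mu(S(\theta_0,h))$ is attained for $|\theta_0|\simeq\sqrt h$ and equals $\simeq h^{2-c}$. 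Hence, for small $h$,
\[ \sup_{\theta_0}\,\mu\big(S(\theta_0,h)\big) \simeq \max\big(h^{3-2c},\,h^{2-c}\big), \]
and dividing by $h$ shows that $\mu$ is a Carleson measure exactly when $c\le 1$ and a vanishing Carleson measure exactly when $c<1$. Unwinding the reduction gives the three cases of the statement; note that, compared with the $\theta$-free computation of \thref{PhicDirichletContainmentExample} where the relevant exponent near $\zeta=1$ is $h^{2-2c}$ (threshold $c=1/2$), the inner factor $\theta$ suppresses the near-radial part of the square and improves the exponent to $h^{3-2c}$, shifting the threshold to $c=1$.

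The step I expect to be the main obstacle is this last estimate. The subtle point is that for $c<1$ the Carleson squares governing the supremum are \emph{not} those centred at the singularity $\zeta=1$, but those sitting at angular distance $\simeq\sqrt h$ from it; one must track the correct power of $h$ separately in the three regimes $|\theta_0|\lesssim h$, $h\lesssim|\theta_0|\lesssim\sqrt h$ and $|\theta_0|\gtrsim\sqrt h$, and also treat with care the thin region of $\D$ adjacent to $\T$ where $1-|z|^2$ degenerates and the comparison $1-|z|^2\simeq s$ needs justification.
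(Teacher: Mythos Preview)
Your reduction to the Carleson measure question and the two-sided estimate
\[
\sup_{\theta_0}\,\mu\big(S(\theta_0,h)\big)\;\simeq\;\max\big(h^{3-2c},\,h^{2-c}\big)
\]
are correct, and the ensuing trichotomy follows. The route, however, differs from the paper's in two respects.

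For the \emph{negative} direction (part~(iii) and the ``not compact'' half of part~(ii)), the paper does not estimate Carleson boxes from below. Instead it invokes a pointwise growth obstruction for multipliers $\hil^2\to\B$: such a $\phi$ must satisfy $|\phi(z)|=O\big((1-|z|)^{-1/2}\big)$ (respectively $o$ for compactness). This is then tested along the horocircles $C_t=\{|\theta|^2=t\}$, where $|1-z|^2=(1-|z|^2)/(-\log t)$ gives $|\theta\phi_c(z)|\simeq (1-|z|)^{-c/2}$ directly. Your lower bound via the off-centre box at $|\theta_0|\simeq\sqrt{h}$ is equivalent in spirit (these boxes sit on the same horocircles), but the paper's formulation is shorter and avoids the box calculus.

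For the \emph{positive} direction (parts~(i) and the containment half of~(ii)), the paper works with the dyadic squares $S_{n,k}$ and handles the two geometries separately. For $|k|\ge2$ (squares away from $\zeta=1$) one has $|1-z|\simeq k\,2^{-n}$ uniformly on $S_{n,k}$, and a one-variable integral in $r$ gives $\mu_1(S_{n,k})\lesssim 2^{-n}$. The square $S_{n,1}$ touching $\zeta=1$ is then covered by a fixed Stolz angle $\Gamma_\alpha$ (on which $|\theta\phi_c|$ is bounded) together with $\bigcup_{m>n}S_{m,2}$, so that $\mu_1(S_{n,1})\le O(2^{-2n})+\sum_{m>n}\mu_1(S_{m,2})=O(2^{-n})$. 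Your decomposition of a single box into $\{|\theta|\lesssim s\}$ and $\{|\theta|\gtrsim s\}$ plays the same role as ``Stolz angle versus off-diagonal dyadic squares'', and your key integral $\int_0^{\min(h,\theta)}e^{-s/\theta^2}\,ds\simeq\min(\theta^2,h)$ replaces the geometric summation $\sum_{m>n}2^{-m}$. Your argument is more unified (one computation handles all three cases); the paper's is more modular and sidesteps the two-variable asymptotics you flag at the end. Either works.
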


The choice of the weight $1/2$ in the definition of $\theta$ in \eqref{thetaEq} is not important, and our results hold true for any positive weight used to define a similar singular inner function. With our choice, we have the convenient formula \[ |\theta(z)|^2 = \exp\Big( -\frac{1-|z|^2}{|1-z|^2}\Big), \quad z \in \D.\] 

The above result should be compared to \thref{PhicDirichletContainmentExample}. The point is that the functions $\phi_c$ appearing in \thref{PhicDirichletContainmentExample} correspond, for $c > 1/2$, to measures $|\phi_c|^2 dA$ which violate the Carleson condition for small boxes containing the point $\zeta = 1$ in their closure. However, multiplying $|\phi_c|dA$ by the factor in \eqref{thetaEq} remedies the situation if $c \in (1/2, 1]$ (and only in that range). This happens because of the exponential non-tangential decay of $|\theta(z)|^2$ when $z$ tends to $1$ inside a given \textit{Stolz angle} with vertex at $\zeta = 1$, given by \begin{equation}
    \label{StolzAngle} \Gamma_\alpha := \Big\{ z \in \D : \frac{1-|z|}{|1-z|} \geq \alpha \Big\}, \quad \alpha \in (0,1).
\end{equation}
We will carefully justify this assertion below.

\subsection{Growth properties of multipliers between $\hil^2$ and $\B$} 

In \cite[Theorem 3.1]{stegenga1980multipliers}, Stegenga established an elegant characterization of the functions $\phi$ which are multipliers between $\hil^2$ and the Bergman space $\B$ in terms of the boundary values of the primitive of $\phi$. However, his result doesn't seem to help our particular analysis. Below we will only really need some simple necessary conditions on $\phi$ to be such a multiplier. The following basic results were known to Stegenga and have also been observed in \cite{feldman1999pointwise}. For a proof, one may consult \cite{feldman1999pointwise}.

\begin{prop} \thlabel{GrowthPropMultipliers}
    Let $\phi: \D \to \mathbb{C}$ be an analytic function.
    \begin{enumerate}[(i)]
        \item For $\phi$ to be a multiplier from $\hil^2$ into $\B$, it is necessary that \[ |\phi(z)| = O\big( (1-|z|)^{-1/2}\big).\]
        \item For $\phi$ to be a compact multiplier from $\hil^2$ into $\B$, it is necessary that \[ |\phi(z)| = o\big((1-|z|)^{-1/2}\big).\]
    \end{enumerate}
\end{prop}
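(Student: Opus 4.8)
The plan is to test the multiplication operator $M_\phi \colon \hil^2 \to \B$ against the normalized reproducing kernels of $\hil^2$ and to combine this with the pointwise evaluation bound available in the Bergman space $\B$. Recall that the reproducing kernel of $\hil^2$ is $k_\lambda(z) = (1-\conj{\lambda}z)^{-1}$, with $\|k_\lambda\|_2^2 = (1-|\lambda|^2)^{-1}$; write $\kappa_\lambda := (1-|\lambda|^2)^{1/2}k_\lambda$ for its unit-norm normalization, so that $\kappa_\lambda(\lambda) = (1-|\lambda|^2)^{-1/2}$. Recall also that the Bergman space $\B$ from \eqref{BergmanSpaceDef} has reproducing kernel $(1-\conj{\lambda}z)^{-2}$ with squared norm $(1-|\lambda|^2)^{-2}$, so that every $g \in \B$ satisfies the pointwise estimate $|g(\lambda)| \leq (1-|\lambda|^2)^{-1}\|g\|_{\B}$.

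For part (i), I would apply this evaluation bound to $g = \phi\kappa_\lambda \in \B$ to obtain
\[ \|M_\phi\| \geq \|\phi\kappa_\lambda\|_{\B} \geq (1-|\lambda|^2)\,|\phi(\lambda)\kappa_\lambda(\lambda)| = (1-|\lambda|^2)^{1/2}\,|\phi(\lambda)|, \]
which rearranges to $|\phi(\lambda)| \leq \|M_\phi\|\,(1-|\lambda|^2)^{-1/2}$. Since $1-|\lambda|^2$ and $1-|\lambda|$ are comparable as $|\lambda| \to 1^-$, this is exactly the claimed $O\big((1-|\lambda|)^{-1/2}\big)$ growth.

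For part (ii), I would first check that $\kappa_\lambda \to 0$ weakly in $\hil^2$ as $|\lambda| \to 1^-$: for any polynomial $p$ one has $\langle \kappa_\lambda, p\rangle_2 = (1-|\lambda|^2)^{1/2}\,\conj{p(\lambda)} \to 0$ because $p$ is bounded on $\D$, and since $\|\kappa_\lambda\|_2 = 1$ uniformly, density of $\Po$ in $\hil^2$ gives weak nullity. A compact operator sends weakly null nets to norm-null ones, so $\|\phi\kappa_\lambda\|_{\B} = \|M_\phi\kappa_\lambda\|_{\B} \to 0$; feeding this back into the inequality of part (i) yields $(1-|\lambda|^2)^{1/2}|\phi(\lambda)| \to 0$, i.e. $|\phi(\lambda)| = o\big((1-|\lambda|)^{-1/2}\big)$.

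The argument is essentially routine and I do not anticipate a genuine obstacle; the only points needing a little care are the weak convergence $\kappa_\lambda \to 0$ (used only for the compact case) and bookkeeping of exponents — the $\hil^2$ kernel is normalized with a power $1/2$ while Bergman point evaluation costs a full power of $(1-|\lambda|^2)^{-1}$, and the difference of these two exponents is precisely what produces the exponent $1/2$ in the statement.
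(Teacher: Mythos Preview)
Your argument is correct. The paper does not actually supply its own proof of this proposition; it simply records the result as known to Stegenga and refers the reader to Feldman's article \cite{feldman1999pointwise} for a proof. The reproducing-kernel test you carry out---pairing the normalized $\hil^2$-kernels $\kappa_\lambda$ with the Bergman point-evaluation bound---is exactly the standard mechanism for deriving such pointwise growth restrictions on multipliers between reproducing kernel Hilbert spaces, and is in the same spirit as the cited reference. The only cosmetic point worth tightening is that ``compact operators send weakly null nets to norm-null ones'' is most cleanly invoked along sequences: for any sequence $\{\lambda_n\}_n$ in $\D$ with $|\lambda_n|\to 1^-$ one has $\kappa_{\lambda_n}\to 0$ weakly in $\hil^2$, hence $\|M_\phi\kappa_{\lambda_n}\|_{\B}\to 0$, and this sequential statement is already equivalent to the desired $o\big((1-|\lambda|)^{-1/2}\big)$ conclusion.
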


\begin{cor}
    Let $\phi_c(z)$ be as in \eqref{phicDef} and $\theta$ be as in \eqref{thetaEq}.
    \begin{enumerate}[(i)]
        \item If $c > 1$, then $\theta \phi_c$ is not a multiplier between $\hil^2$ and $\B$.
        \item If $c = 1$, then $\theta \phi_c = \theta \phi_1$ is not a compact multiplier between $\hil^2$ and $\B$.
    \end{enumerate}
\end{cor}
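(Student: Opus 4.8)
The plan is to deduce both statements directly from the necessary growth conditions in \thref{GrowthPropMultipliers}, simply by estimating the modulus of $\theta \phi_c$ along the real segment $z = r \in (0,1)$, where the computation is cleanest. First I would record that along this segment $|\phi_c(r)| = (1-r)^{-c}$ and, from the convenient boundary formula quoted just after \thref{ThetaPhicDirichletContainmentProp}, $|\theta(r)|^2 = \exp\big(-(1-r^2)/(1-r)^2\big) = \exp\big(-(1+r)/(1-r)\big)$. Hence $|\theta(r)| = \exp\big(-\tfrac12(1+r)/(1-r)\big)$, which decays to $0$ faster than any power of $(1-r)$ as $r \to 1^-$. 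This is a decay factor, so it will not on its own be enough to force a growth contradiction; the point is rather the opposite — the product $\theta \phi_c$ is small on the radius, so no contradiction arises there, and one must instead look along a path where $\theta$ does \emph{not} decay.

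The key observation is that $\theta$ is inner, so $|\theta(z)| \to 1$ as $z \to \zeta$ for every boundary point $\zeta \neq 1$, and more to the point $|\theta(z)|$ stays bounded below along suitable paths approaching $1$ from outside every Stolz angle. Concretely, I would choose a path $z = z(r)$ approaching $\zeta = 1$ tangentially — for instance points of the form $z = 1 - (1-r) + i\,s(r)$ with $s(r)$ chosen so that $(1-|z|)/|1-z|^2$ stays bounded (tangential approach), which keeps $|\theta(z)|^2 = \exp\big(-(1-|z|^2)/|1-z|^2\big)$ bounded away from $0$, while simultaneously $|1-z| \asymp |s(r)| \to 0$, so that $|\phi_c(z)| = |1-z|^{-c}$ still blows up. Along such a path, $1-|z| \asymp 1 - r$, and one wants to arrange $|1 - z| \asymp \sqrt{1-|z|}$, i.e. $|1-z|^2 \asymp 1 - |z|$, which is precisely the parabolic boundary of the region where $|\theta|$ is bounded below. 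On this parabolic path, $|\theta(z)| \gtrsim 1$ and $|\phi_c(z)| \asymp |1-z|^{-c} \asymp (1-|z|)^{-c/2}$, so $|\theta(z)\phi_c(z)| \gtrsim (1-|z|)^{-c/2}$. For $c > 1$ this violates the $O\big((1-|z|)^{-1/2}\big)$ bound in part (i) of \thref{GrowthPropMultipliers}, and for $c = 1$ we get $|\theta(z)\phi_1(z)| \gtrsim (1-|z|)^{-1/2}$ along a sequence tending to $1$, which is not $o\big((1-|z|)^{-1/2}\big)$ and hence, by part (ii) of \thref{GrowthPropMultipliers}, rules out compactness.

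The main obstacle — and the only genuinely nonroutine step — is pinning down the parabolic approach path and verifying the two competing asymptotics along it simultaneously: that $(1-|z|^2)/|1-z|^2$ stays bounded (so $|\theta|$ stays bounded below) while $|1-z|^2/(1-|z|)$ also stays bounded below (so that $|1-z|^{-c} \gtrsim (1-|z|)^{-c/2}$ rather than merely $\gtrsim (1-|z|)^{-c}$ with a worse constant). Parametrizing $z = 1 - (1-|z|)\,w$ with $w$ in the right half-plane makes both quantities explicit functions of $w$ and $1-|z|$, and a short computation shows the parabola $|1-z|^2 = 1-|z|$ does the job. I would carry this out as follows: (1) write down the explicit parabolic curve; (2) check $|\theta|$ is bounded below on it using the boundary formula for $|\theta|^2$; (3) substitute into $|\phi_c|$ to get the lower bound $(1-|z|)^{-c/2}$; (4) invoke \thref{GrowthPropMultipliers}(i) for $c>1$ and \thref{GrowthPropMultipliers}(ii) for $c=1$ to conclude.
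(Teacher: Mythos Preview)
Your proposal is correct and follows essentially the same approach as the paper: both arguments pick a tangential path to $\zeta=1$ along which $(1-|z|^2)/|1-z|^2$ stays bounded---so that $|\theta|$ is bounded below---while $|1-z|^2 \asymp 1-|z|$, giving $|\theta\phi_c| \gtrsim (1-|z|)^{-c/2}$, and then invoke \thref{GrowthPropMultipliers}. The only cosmetic difference is that the paper parametrizes these paths as the level sets $C_t = \{|\theta|^2 = t\}$, which turn out to be circles internally tangent to $\T$ at $1$ and on which one has the exact relation $|1-z|^2 = (1-|z|^2)/(-\log t)$, yielding the clean formula $|\theta\phi_c| = \sqrt{t(-\log t)^c}\,(1-|z|^2)^{-c/2}$; your ``parabola'' $|1-z|^2 = 1-|z|$ is, up to the harmless factor $1+|z|$, precisely such a level curve.
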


\begin{proof}
To prove the corollary, we need only to check that the necessary conditions stated in \thref{GrowthPropMultipliers} are violated. To see this, we consider the following level set \[ C_t = \Big\{ z \in \D : |\theta(z)|^2 = t \Big\}, \quad t \in (0,1)\] Setting $s = -\log (t)$, we readily compute that \[ C_t = \Big\{ z = x+iy \in \D : \Big(x-\frac{s}{1+s}\Big)^2 + y^2 = \frac{1}{(1+s)^2} \Big\} \] which is a circle inside $\D$ tangent to $\T$ at the point $\zeta = 1$. For $z = x+iy \in C_t$ we have \[ |1-z|^2 = \frac{1-|z|^2}{s}\] which implies \[ |\theta(z) \phi_c(z)| = \frac{\sqrt{ts^c}}{(1-|z|^2)^{c/2}}, \quad z =x+iy \in C_t.\] If $t$ remains fixed and $c = 1$, then our above estimate implies that $(1-|z|)^{1/2}|\theta \phi_c(z)|$ is constant as $z \to 1$ along $C_t$, and therefore $(ii)$ of \thref{GrowthPropMultipliers} shows that $\theta \phi_1$ cannot be a compact multiplier between $\hil^2$ and $\B$. If $c > 1$, then instead $(1-|z|)^{1/2}|\theta \phi_c(z)|$ grows to infinity as $z \to 1$ along $C_t$, and so now instead $(i)$ of \thref{GrowthPropMultipliers} shows that $\theta \phi_c$ cannot be a multiplier between $\hil^2$ and $\B$.
\end{proof}

It follows from part $(i)$ of the above corollary that $(iii)$ of \thref{ThetaPhicDirichletContainmentProp} holds. Moreover, we know that the containment in $(ii)$ in \thref{ThetaPhicDirichletContainmentProp}, whether it holds or not, at least cannot be compact. We shall now verify the corresponding Carleson measure conditions and prove the remaining part of \thref{ThetaPhicDirichletContainmentProp}.

\subsection{Dyadic Carleson squares} The remaining part of the proof of \thref{ThetaPhicDirichletContainmentProp} requires a computation which is unfortunately a bit messy. To somewhat remedy this, we will use a dyadic system of Carleson squares in $\D$.

If $\mu$ is a positive measure on $\D$, then to verify the Carleson measure condition in \eqref{CarlesonMeasCond} for some $C > 0$ it suffices to verify that there exists a constant $C' > 0$ such that \begin{equation}
    \label{DyadicCarlesonMeasCond} \mu(S_{n,k}) \leq C'2^{-n}
\end{equation} for each \textit{dyadic Carleson square} $S_{n,k}$. We define these as \begin{equation} \label{DyadicCarlesonSqDef} S_{n,k} = \{ z = re^{i\theta} \in \D : \theta \in I_{n,k} , 1-r \leq 2^{-n} \}, \quad n \in \mathbb{N}, \, k \in \{\pm 1, \pm 2, \ldots, \pm 2^{n-1}\}, \end{equation} where $I_{n,k}$ is the dyadic subarc of $\T$ given by \[I_{n,k} =  \{ e^{i\theta} :  2\pi (k-1) 2^{-n} \leq \theta \leq 2\pi k2^{-n} \}\]  for positive $k$, and by \[ I_{n,k} = \{ z \in \D : \conj{z} \in I_{n,-k} \}\] for negative $k$. Indeed, every Carleson square $C(\theta_0, h)$ is contained in the union of at most two such dyadic Carleson squares of side-length at most $2h$, and so \eqref{DyadicCarlesonMeasCond} implies \eqref{CarlesonMeasCond} with $C = 4C'$. For the same reason, to verify the vanishing Carleson measure condition for $\mu$, it suffices to show the left-hand side in \eqref{DyadicCarlesonMeasCond} is of order $o(2^{-n})$.

One obvious geometric property of the dyadic Carleson system that we shall use is the following: for some $\alpha \in (0,1)$, the corresponding Stolz angle $\Gamma_\alpha$ in \eqref{StolzAngle} has such a wide opening that the containment \begin{equation}
    \label{Sn1ContainmentRelation} S_{n,1} \subset \left( \Gamma_\alpha \cup \Big( \cup_{m > n} S_{m,2}\Big) \right)
\end{equation} is satisfied. This containment is quite obvious geometrically, and algebraically we may verify it in the following way. Assume that $z = re^{i\theta} \in S_{n,1}$, but $z \not\in \cup_{m > n} S_{m,2}$. We shall show that $z \in \Gamma_\alpha$ for some $\alpha$ which we shall compute explicitly. There exists a smallest integer $m > n$ for which $z \not\in S_{m,1}$. Then, by assumption, we have that $z \in S_{m-1, 1} \setminus \big(S_{m,1} \cup S_{m,2} \big)$. By definition of the dyadic Carleson squares in \eqref{DyadicCarlesonSqDef}, this tells us that \[ 2^{-m} < 1 - r = 1-|z| < 2^{-m+1}\] and \[ |1-z| < |1-e^{i\theta}| + |e^{i\theta} - z| \leq 2\pi 2^{-m+1} + 2^{-m+1}.\] Thus \[ \frac{1-|z|}{|1-z|} \geq \frac{1}{4\pi + 2} := \alpha. \] Hence $z \in \Gamma_\alpha$.

\subsection{Proof of \thref{ThetaPhicDirichletContainmentProp}}
First, we estimate the $\mu$-measure of the dyadic squares $S_{n,k}$ which do not contain $\zeta = 1$, i.e, we deal with $S_{n,k}$ for $|k| \geq 2$.

\begin{lem} \thlabel{CarlesonEstimate1}
    Let $c\in (0,1]$ and \begin{equation}
        \label{muMeasureEq}d\mu_c(z) = \frac{\exp\Big( - \frac{1-|z|^2}{|1-z|^2}\Big)}{|1-z|^{2c}}dA(z) = |\theta(z)\phi_c(z)|^2 dA(z).
    \end{equation} If $|k| \geq 2$, then \[ \mu_1(S_{n,k}) = O(2^{-n})\] while for $c \in (0,1)$, 
    \[ \mu_c(S_{n,k}) = o(2^{-n}).\] 
\end{lem}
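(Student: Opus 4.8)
The plan is to estimate $\mu_c(S_{n,k})$ for $|k|\ge 2$ by controlling the integrand $|\theta(z)\phi_c(z)|^2$ pointwise on such a square and multiplying by the area $A(S_{n,k})\simeq 2^{-2n}$. On a square $S_{n,k}$ with $|k|\ge 2$, the point $\zeta=1$ lies outside (and in fact at a definite distance from) the arc $I_{n,k}$, so every $z=re^{i\theta}\in S_{n,k}$ satisfies $|1-z|\gtrsim |k|\,2^{-n}$, in particular $|1-z|\gtrsim 2^{-n}$; more precisely $|1-z|\simeq |1-e^{i\theta}|\simeq |k|2^{-n}$ whenever $|k|\ge 2$ and $1-r\le 2^{-n}$. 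Thus on $S_{n,k}$ one has $|\phi_c(z)|^2=|1-z|^{-2c}\lesssim (|k|2^{-n})^{-2c}$ and $|\theta(z)|^2=\exp\big(-(1-|z|^2)/|1-z|^2\big)\le 1$. This already gives
\[
\mu_c(S_{n,k})\ \lesssim\ 2^{-2n}\cdot (|k|2^{-n})^{-2c}\ =\ |k|^{-2c}\,2^{-2n(1-c)}.
\]
For $c=1$ this is $|k|^{-2}2^{0}=|k|^{-2}$, which is $O(2^{-n})$ only in a crude sense and in fact far better; but we want $O(2^{-n})$ uniformly, so I will sharpen the bound on $|\theta|^2$ rather than discard it.

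The key point is that for $|k|\ge 2$ the ratio $(1-|z|^2)/|1-z|^2$ is \emph{bounded below}, so $|\theta(z)|^2$ is exponentially small. Indeed, writing $1-|z|^2\simeq 1-r$ and using $|1-z|\gtrsim |k|2^{-n}$ one gets, for $z\in S_{n,k}$ with $1-r\in[2^{-m-1},2^{-m}]$ and $m\ge n$,
\[
\frac{1-|z|^2}{|1-z|^2}\ \gtrsim\ \frac{2^{-m}}{|k|^2 2^{-2n}}\ \gtrsim\ \frac{2^{2n-m}}{|k|^2}.
\]
Splitting $S_{n,k}$ into the dyadic annuli $A_m=\{1-r\in[2^{-m-1},2^{-m}]\}$, $m\ge n$, each of area $\simeq 2^{-n-m}$, we obtain
\[
\mu_c(S_{n,k})\ \lesssim\ \sum_{m\ge n} 2^{-n-m}\,(|k|2^{-n})^{-2c}\,\exp\!\Big(\!-c_0\,\tfrac{2^{2n-m}}{|k|^2}\Big)
\]
for a fixed $c_0>0$. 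The factor $2^{-n}$ is already extracted; it remains to show $\sum_{m\ge n} 2^{-m}\,|k|^{-2c}2^{2cn}\exp(-c_0 2^{2n-m}/|k|^2)$ is $O(1)$ when $c=1$ and $o(1)$ (as $n\to\infty$, uniformly in $k$) when $c<1$. Substituting $j=m-n\ge 0$ turns the sum into $2^{-n}|k|^{-2c}2^{2cn}\sum_{j\ge0}2^{-j}\exp(-c_0 2^{-j}/|k|^2)$; the sum over $j$ is dominated by $\sum_{j} 2^{-j}\min(1,\,C|k|^2 2^{j})$, which is $\simeq \log|k|$ or so — bounded polynomially in a way that is absorbed. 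Collecting powers of $2$: the prefactor is $2^{(2c-1)n}|k|^{-2c}(\text{poly in }\log|k|)$. For $c=1$ this is $2^{n}|k|^{-2}(\ldots)$; combined with the extracted $2^{-n}$ the total $\mu_1(S_{n,k})$ is $O(2^{-n})$ after one more power of $|k|$ is spent, and summing the geometric tail in $j$ rigorously is the routine part. For $c<1$ the prefactor $2^{(2c-1)n}$ is bounded (since $2c-1<1$, indeed we only need $2c<2$) and more care shows it decays, yielding $\mu_c(S_{n,k})=o(2^{-n})$.

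The main obstacle, and the place where the bookkeeping must be done carefully, is getting the estimate \emph{uniform in $k$} and with the \emph{correct power of $2^{-n}$}: a careless pointwise bound loses the exponential decay of $\theta$ entirely and gives only $\mu_c(S_{n,k})=O(|k|^{-2c}2^{-2n(1-c)})$, which at $c=1$ is not visibly $O(2^{-n})$ and at $c$ near $1$ is not visibly $o(2^{-n})$. The remedy, as sketched, is to always keep the factor $\exp(-(1-|z|^2)/|1-z|^2)$ and to integrate over the dyadic annuli $\{1-r\simeq 2^{-m}\}$ inside $S_{n,k}$, on each of which $(1-|z|^2)/|1-z|^2\simeq 2^{2n-m}/k^2$ is essentially constant; the resulting sum over $m$ (equivalently over $j=m-n$) is an elementary series of the form $\sum_{j\ge 0}2^{-j}\exp(-c_0 2^{-j}/k^2)$ whose size is easily controlled, and comparing its growth against the prefactor $2^{(2c-1)n}k^{-2c}$ delivers both claimed estimates. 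I would also remark that for $|k|$ large (say $|k|\gtrsim 2^{n/2}$) even the trivial bound $|\theta|^2\le 1$ already suffices, so only the regime of moderate $|k|$ needs the annular decomposition, which simplifies the write-up.
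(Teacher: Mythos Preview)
Your overall strategy---pin down $|1-z|\simeq |k|\,2^{-n}$ on $S_{n,k}$ and keep the exponential factor $|\theta|^2$ rather than discard it---is exactly the right idea, and it is what the paper uses. But your bookkeeping after the substitution $j=m-n$ is wrong in a way that actually breaks the argument. One has $2^{-n-m}=2^{-2n-j}$ (not $2^{-n-j}$) and $2^{2n-m}=2^{n-j}$ (not $2^{-j}$), so the correct expression is
\[
\mu_c(S_{n,k})\ \lesssim\ |k|^{-2c}\,2^{(2c-2)n}\sum_{j\ge 0} 2^{-j}\exp\!\Big(-c_0\,\frac{2^{\,n-j}}{k^2}\Big).
\]
With your (incorrect) exponent $2^{-j}/k^2$, the exponential is essentially $1$ for every $j\ge 0$ and $|k|\ge 2$, and your bound collapses to $|k|^{-2c}2^{(2c-1)n}$; at $c=1$ this is $2^{n}/k^2$, which is \emph{not} $O(2^{-n})$ for moderate $k$. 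Relatedly, the claim that ``$2^{(2c-1)n}$ is bounded since $2c-1<1$'' is simply false for $c\in(1/2,1)$: it grows unless $c\le 1/2$. With the correct exponent $2^{n-j}/k^2$, the small-$j$ terms are killed by the exponential and the sum over $j$ is $\simeq k^2 2^{-n}$ (for $k\le 2^{n/2}$; for larger $k$ the trivial bound already suffices, as you note), which after multiplication by $|k|^{-2c}2^{(2c-2)n}$ does give the claimed estimates. So the annular decomposition can be made to work, but the write-up needs to be redone.

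The paper in fact avoids the annular decomposition entirely, and its route is shorter. For $c=1$ it just integrates in the radial variable: with $|1-z|\le Dkh$ on $S_{n,k}$ (where $h=2^{-n}$) one gets
\[
\mu_1(S_{n,k})\ \le\ \frac{D^2}{\pi h k^{2}}\int_0^{h}\exp\!\Big(-\frac{x}{(Dkh)^2}\Big)\,dx\ \le\ \frac{D^2}{\pi h k^{2}}\cdot (Dkh)^2\ =\ \frac{D^4}{\pi}\,h,
\]
the factor $h/\pi$ coming from the arc length of $I_{n,k}$. For $c<1$ the paper writes $d\mu_c=|1-z|^{2-2c}\,d\mu_1$ and runs a short $\delta$-argument: for large $n$, either $S_{n,k}$ lies inside the disk of radius $\delta$ about $1$, in which case $\mu_c(S_{n,k})\le \delta^{2-2c}\cdot (D^4/\pi)h$, or it lies outside the disk of radius $\delta/2$, in which case the trivial pointwise bound gives $\mu_c(S_{n,k})\lesssim \delta^{-2c}h^2$. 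This delivers the $o(h)$ uniformly in $k$ without any splitting over annuli or over the size of $k$.
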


\begin{proof} Let $h := 2^{-n}$. First we treat the case $c=1$. By symmetry, we may assume that $k$ is positive. It is geometrically evident that there exists a constant $D > 1$ which is independent of $n$ and $k$ such that for $z \in S_{n,k}$ we have that \[ \frac{hk}{D} \leq |1-z| \leq Dhk, \quad k \geq 2.\] It follows that \[ \frac{\exp\Big( - \frac{1-|z|^2}{|1-z|^2}\Big)}{|1-z|^{2}} \leq \frac{D^{2}}{h^{2} k^{2}}\exp\Big( - \frac{1-|z|^2}{(Dkh)^2}\Big), \quad z \in S_{n,k}.\] Therefore, using polar coordinates, we readily obtain that 
    \begin{align*}
    \mu_1(S_{n,k}) &\leq \frac{D^{2}}{h^{2} k^{2}}\int_{S_{n,k}} \exp\Big( - \frac{1-|z|^2}{(Dkh)^2}\Big) dA(z) \\ &=
    \frac{D^{2}}{\pi h^{2} k^{2}}\int_{I_{n,k}} \int_{1-h}^1 \exp\Big( - \frac{(1-r)(1+r)}{(Dkh)^2}\Big) r\, dr\, d\theta \\
    &\leq \frac{D^{2}}{\pi  h k^{2}} \int_0^h \exp\Big( \frac{-x}{(Dkh)^2}\Big) dx \\
    &\leq \frac{D^4 h}{\pi}.
    \end{align*} Between the second and third lines we used that $I_{n,k}$ has length $h$. This gives the desired estimate for $c = 1$. If $c \in (0,1)$, then \[\mu_c(S_{n,k}) = \int_{S_{n,k}} |1-z|^{2-2c} d\mu_1(z).\] Fix $\delta > 0$. For $n$ large enough, either $S_{n,k}$ is fully contained within a disk around $\zeta = 1$ of radius $\delta$, or it does not intersect a disk around $\zeta = 1$ of radius $\delta/2$. In the first case, the above estimate, and our result for $c = 1$, gives us \[\mu_c(S_{n,k}) \leq \delta^{2-2c}\frac{D^4 h}{\pi}.\] In the second case, the formula \eqref{muMeasureEq} immediately gives \[ \mu_c(S_{n,k}) \leq \frac{2^{2c}}{\pi \delta^{2c}} h^2.\] It follows that $\mu_c(S_{n,k}) = o(h)$.
\end{proof}

\begin{lem} \label{CarlesonEstimate2} With $\mu_c$ as in \eqref{muMeasureEq}, we have \[ \mu_1(S_{n,1}) = \mu_1(S_{n,-1}) = O(2^{-n})\] and \[ \mu_c(S_{n,1}) = \mu_c(S_{n,-1}) = o(2^{-n}), \quad c \in (0,1).\]    
\end{lem}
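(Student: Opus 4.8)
The plan is to estimate $\mu_c(S_{n,1})$ by exploiting the geometric containment \eqref{Sn1ContainmentRelation}, which reduces the job to two pieces: the portion of $S_{n,1}$ lying inside a fixed Stolz angle $\Gamma_\alpha$ at $\zeta = 1$, and the portions lying in the dyadic squares $S_{m,2}$ with $m > n$, the latter already being controlled by \thref{CarlesonEstimate1}. Concretely, I would write
\[
\mu_c(S_{n,1}) \leq \mu_c\big(S_{n,1} \cap \Gamma_\alpha\big) + \sum_{m > n} \mu_c(S_{m,2}),
\]
with $\alpha = 1/(4\pi+2)$ as computed after \eqref{Sn1ContainmentRelation}. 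For the tail sum, \thref{CarlesonEstimate1} gives $\mu_1(S_{m,2}) = O(2^{-m})$, so $\sum_{m>n}\mu_1(S_{m,2}) = O(2^{-n})$; and for $c \in (0,1)$ the bound $\mu_c(S_{m,2}) = o(2^{-m})$ must be made uniform enough (examining the proof of \thref{CarlesonEstimate1}, the $o$-estimate there degrades like $\delta^{2-2c}$ uniformly in the square, so a diagonal/$\delta$-argument yields $\sum_{m>n}\mu_c(S_{m,2}) = o(2^{-n})$). Thus the tail behaves exactly as needed, and everything reduces to the Stolz-angle piece.

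For the Stolz-angle piece, the key input is the exponential decay of $|\theta(z)|^2$ inside $\Gamma_\alpha$. On $\Gamma_\alpha$ we have $\frac{1-|z|^2}{|1-z|^2} \geq \frac{1-|z|}{|1-z|} \geq \alpha$, but more usefully, writing $h = 2^{-n}$ and integrating over $S_{n,1}$, I would switch to the variable $\rho = 1-|z|$ and $t = |1-z|$ and use that on $S_{n,1}$ one has $t \lesssim h$ and $\rho \leq h$, while the integrand is
\[
\frac{\exp\big(-\tfrac{1-|z|^2}{|1-z|^2}\big)}{|1-z|^{2c}} \leq \frac{\exp\big(-\tfrac{c_0 \rho}{t^2}\big)}{t^{2c}}
\]
for a suitable $c_0 > 0$ (using $1-|z|^2 \asymp \rho$ and restricting to a neighborhood of $\zeta=1$). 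The point is that when $t$ is of the order of $\sqrt{\rho}$ or smaller — which is precisely the Stolz-angle regime near $\zeta = 1$ — the exponential factor $\exp(-c_0\rho/t^2)$ kills the singularity $t^{-2c}$. I would set up the area integral in a coordinate system adapted to the tangent circles $C_t$ (those appearing in the Corollary after \thref{GrowthPropMultipliers}), where the Jacobian and the relation $t^2 \asymp \rho/s$, $s = -\log|\theta|^2$, are explicit; then for fixed small $t$ the contribution is governed by $\int_0^{h} t^{-2c}\exp(-c_0\rho/t^2)\,d\rho \lesssim t^{2-2c}$, and summing/integrating over the admissible range of $t$ up to $O(h)$ gives $O(h^{2-2c+1})$-type bounds — which for $c = 1$ is $O(h)$ and for $c < 1$ is $o(h)$. (Alternatively, and perhaps more cleanly, one slices $S_{n,1} \cap \Gamma_\alpha$ by the annuli $2^{-m-1} < 1-|z| \leq 2^{-m}$ for $m \geq n$, on each of which $|1-z| \gtrsim 2^{-m/2}$ forces $\exp(-c_0 2^{-m}/|1-z|^2) \leq \exp(-c_0 2^{-m}/C 2^{-m}) $... — better yet, on $\Gamma_\alpha$ directly $|1-z| \leq \alpha^{-1}(1-|z|)$, so $t^{-2c} \leq \alpha^{-2c}\rho^{-2c}$ and the exponential is harmless only if we keep it; so I would instead use $|1-z|^2 \geq (1-|z|)^2$ trivially combined with the non-tangential lower bound to interpolate.)

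The main obstacle I anticipate is organizing the Stolz-angle integral so that the competition between the polynomial singularity $|1-z|^{-2c}$ and the Gaussian-type decay $\exp(-\,(1-|z|^2)/|1-z|^2)$ is handled cleanly and uniformly in $n$, especially extracting the sharp $O(2^{-n})$ versus $o(2^{-n})$ dichotomy at $c=1$. The cleanest route is probably to parametrize by the level curves $C_t$ of $|\theta|^2$ as in the earlier Corollary — on $C_t$ one has the exact identity $|1-z|^2 = (1-|z|^2)/s$ with $s = -\log t$, so the integrand becomes $t\,s^{c}(1-|z|^2)^{-c}$ — integrate first along $C_t \cap S_{n,1}$ (an arc of length comparable to $\min(h,\,\text{radius of }C_t) = \min(h, (1+s)^{-1})$) and then in $t \in (0,1)$; the resulting one-variable integral $\int_0^1 t\, s^c \cdot (\text{arc length}) \cdot (1-|z|^2)^{-c}\,\frac{dt}{\dots}$ is elementary and produces exactly the claimed rates. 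Once both pieces are in hand, adding them gives $\mu_c(S_{n,1}) = O(2^{-n})$ for $c=1$ and $o(2^{-n})$ for $c \in (0,1)$, and by the reflection symmetry $z \mapsto \conj z$ the same holds for $S_{n,-1}$, completing the lemma.
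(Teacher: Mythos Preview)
Your overall decomposition via \eqref{Sn1ContainmentRelation} and your treatment of the tail $\sum_{m>n}\mu_c(S_{m,2})$ are exactly what the paper does. Where you diverge is the Stolz-angle piece, and there you are making the argument far harder than it needs to be. The paper's observation is simply that the density $|\theta\phi_c|^2$ is \emph{bounded} on $\Gamma_\alpha$: from $\frac{1-|z|}{|1-z|}\geq\alpha$ one gets $\frac{1-|z|^2}{|1-z|^2}\geq \frac{\alpha}{|1-z|}$, so
\[
|\theta(z)\phi_c(z)|^2 \leq \frac{\exp\big(-\alpha/|1-z|\big)}{|1-z|^{2c}} \leq D
\]
for some constant $D$ (exponential decay beats any polynomial blow-up). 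Hence $\mu_c(S_{n,1}\cap\Gamma_\alpha)\leq D\cdot A(S_{n,1}) = O(h^2)$, which is already $o(h)$. No level-curve parametrization, no annular slicing, no Jacobians --- just area times a sup bound.

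Your sketches for the Stolz piece (integrating along $C_t$, interpolating via $\rho$ and $t$) are not wrong in spirit, but they are not carried to a conclusion, and you yourself flag uncertainty about ``organizing the Stolz-angle integral'' and ``extracting the sharp $O(2^{-n})$ versus $o(2^{-n})$ dichotomy''. That dichotomy does not actually live in the Stolz piece at all: it contributes $O(h^2)$ for every $c\in(0,1]$. The $O$ versus $o$ distinction comes entirely from the tail $\sum_{m>n}\mu_c(S_{m,2})$, where \thref{CarlesonEstimate1} already gives it. For $c\in(0,1)$ the paper simply writes $\mu_c = \int |1-z|^{2-2c}\,d\mu_1$ and repeats the $\delta$-dichotomy from the proof of \thref{CarlesonEstimate1}, rather than tracking uniform $o$-bounds through the sum as you propose.
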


\begin{proof}
  The estimate for $c \in (0,1)$ follows from the estimate for $c=1$ in the same way as above in the proof of \thref{CarlesonEstimate1}, so we treat only the latter case. Set, again, $h := 2^{-n}$. By the exponential decay of $|\theta(z)|$ as $z \to 1$ inside the Stolz angle $\Gamma_\alpha$, we certainly have \[ \sup_{z \in \Gamma_\alpha} |\theta(z) \phi_c(z)|^2 < D\] for some constant $D > 0$. Thus, using \eqref{Sn1ContainmentRelation} and \thref{CarlesonEstimate1} in the form $\mu_1(S_{m,2}) \leq A \cdot 2^{-m}$ for some $A > 0$, we obtain \[\mu_1(S_{n,1}) \leq Dh^2 + \sum_{m > n} \mu_1(S_{m,2}) \leq
      Dh^2 + A \sum_{k = 1}^\infty \frac{h}{2^k} =O(h).\] \end{proof}

The two above lemmas show that the measure $\mu_c$ defined in \eqref{muMeasureEq} is a Carleson measure for $c = 1$, and a vanishing Carleson measure for $c \in (0,1)$. Thus, for $c \in (0,1)$ the functions $\theta \phi_c$ are compact multipliers between $\hil^2$ and the Bergman space $\B$, while for $c = 1$, $\theta \phi_c$ is a bounded but not compact multiplier between the same spaces. An application of our multiplier criterion in \thref{criterion2lemma} finishes the proof of \thref{ThetaPhicDirichletContainmentProp}.

\section{Further remarks}

We conclude with a few observations about directions in which to follow with research from here.

\subsection{Other equivalences and embeddings.} In Proposition \ref{BMOAContProp}, we showed that it is equivalent for $\hb$ to contain any of $\hil^\infty$, $\BMOA$, $\VMOA$ or $\A$. It seems reasonable to ask where is the limit of these equivalences, that is, what is the largest (or smallest) space whose containment in $\hb$ is still necessary (respectively, sufficient) for containment of $\hil^\infty$. One can also expand on the results above from the plethora of theorems known regarding embeddings between analytic function spaces. A good and recent reference regarding such embeddings is \cite{Llinares}.

\subsection{Multiplying the symbol by other types of functions.} The effects of the inner part noted in Section \ref{innerFactorEffectSection} could in fact be due to choosing an inadequate generalization of the situation for the embeddings of $\hil^p$ or weighted Bergman spaces. Indeed, multiplying $b$ by any function $\phi$ that is bounded in modulus by 1 will at least preserve all the corresponding embeddings, since $\hil(b) \subset \hil(\phi b)$. A relevant question is, therefore, what functions \emph{improve} the embeddings. Contractive divisors are natural candidates there. Such functions admit several definitions, and play an essential role in extremal problems as well as in the study of invariant subspaces for the shift operator. One can for example say $\phi$ is \textit{Bergman-inner} (or a \emph{contractive divisor}) if it has norm 1 and $\phi \perp z^k \phi$ for all $k \geq 1$ (where the orthogonality is taken in $\B$).

There is a rather interesting connection between embeddings $\Di \subset \hb$ and Bergman-inner functions. It is well-known that such a function induces a bounded multiplication operator $M_\phi: \hil^2 \to \B$ (see, for instance, \cite[Theorem 3.3]{hedenmalmbergmanspaces}). It is also known that if $\theta$ is a singular inner function which is not \textit{cyclic} in $\B$, in the sense that the smallest closed subspace $[\theta]$ of $\B$ which contains $\theta$ and is invariant for the multiplication operator $M_z: f(z) \mapsto z f(z)$ is not the whole space, then any non-zero function $\phi$ in the one-dimensional subspace $[\theta] \ominus M_z[\theta]$ is Bergman-inner, and moreover we have $\phi \in \N$, the Nevanlinna class of functions expressible as a quotient of two bounded functions in $\D$ (see \cite[Theorem 3.3]{hedenmalm1996beurling}. Thus $\phi = \frac{\theta b}{Sa}$ for some Pythagorean pair of outer functions $a$, $b$, and $S$ singular inner (it is plausible that the singular factor $S$ in the denominator is always trivial, but to the best of the authors' knowledge this property of Bergman-inner functions has not yet been established in the existing literature). Then $M_{S\phi} : \hil^2 \to \B$ is also bounded, $S\phi \in \N^+$, and so our \thref{criterion2lemma} implies that $\Di \subset \hil(\theta b)$. We have in this way associated a space $\hil(\theta b)$ containing $\Di$ to each singular inner function $\theta$ which is not cyclic in $\B$. A characterization of these inner functions has been established in the deep works of Korenblum in \cite{korenblum1977beurling} and Roberts in \cite{roberts1985cyclic}. Given our investigations in Section \ref{innerFactorEffectSection} on effects of inner factors, a natural question is the following.

\begin{question*}
    Let $\theta$ be singular inner, and associate $\hil(\theta b)$ to $\theta$ as above. Then $\Di \subset \hil(\theta b)$. But do we have $\Di \subset \hb$?
\end{question*}

From \cite[Theorem 3.7]{hedenmalm1996beurling} we deduce that if $\theta$ is given by \eqref{thetaEq}, then the above construction presents us with the Bergman-inner function $\phi$ which satisfies \[\frac{\phi(z)}{\theta(z)} = \frac{b(z)}{a(z)} = \frac{1}{1-z} + 1, \quad z \in \D,\] from which it easily follows that $\Di \not\subset \hb$ (recall \thref{PhicDirichletContainmentExample}). So the answer to the above question is negative in this particular case. Is this typical? If so, then we will have obtained a family of examples in which various singular inner factors $\theta$ are responsible for the containment $\Di \subset \hil(\theta b)$.

\subsection{Multipliers.} The literature on multipliers between analytic function spaces is vast and our results could certainly benefit from an extensive look at multiplier properties in $\hb$ spaces. A likely ally is the theory of multipliers for model spaces developed in \cite{FHR}.

%\bibliographystyle{plain}
%\bibliography{mybib}

\end{document}